\documentclass[twoside,a4paper,12pt]{amsart}
\usepackage{amssymb}
\usepackage{amsmath}
\usepackage{harpoon}

\usepackage[euler]{textgreek}
\usepackage[all,color]{xy}

\usepackage{ifpdf}
\ifpdf
  \usepackage[colorlinks,linkcolor=red,anchorcolor=blue,citecolor=green]{hyperref}
\else
\fi

\def\isdraft{1}

\ifx\isdraft\undefined
\else
    \usepackage{fancyhdr}
    \pagestyle{fancy}
    \fancyhead{}
    \fancyhead[RO]{\small \rightmark}
    \fancyhead[LE]{\small \leftmark}
    \fancyfoot[LE,RO]{\small \thepage}
    \fancyfoot[LO,RE]{\small \today}
    \fancyfoot[C]{}

\fi

\newtheorem{theorem}{Theorem}[section]

\newtheorem{corollary}[theorem]{Corollary}
\newtheorem{lemma}[theorem]{Lemma}

\newtheorem{proposition}[theorem]{Proposition}

\newtheorem{fact}[theorem]{Fact}

\theoremstyle{definition}
\newtheorem{definition}[theorem]{Definition}

\numberwithin{equation}{section}

\renewcommand{\vec}{\overrightarrow}
\renewcommand{\hat}{\widehat}
\newcommand{\N}{{\mathbb{N}}}
% PA

\newcommand{\Cod}{\operatorname{Cod}}
\newcommand{\onomega}{\omega}

% subsystems of SOA
%% BIG FIVE
\newcommand{\RCA}{\mathsf{RCA}_0}
\newcommand{\WKL}{\mathsf{WKL}_0}

%% measure-theoretic

%% Ramsey theory
\newcommand{\RT}{\mathsf{RT}}

\newcommand{\TT}{\mathsf{TT}}
\newcommand{\M}{\mathfrak{M}}

\title[The Infinite Pigeonhole Principle for Trees]{Conservation Strength of The Infinite Pigeonhole Principle for Trees}

\author[Chong]{C.~T.~Chong}
\address{Department of Mathematics\\
National University of Singapore\\Singapore 119076}
\email{chongct@nus.edu.sg}

\author[Wang]{Wei Wang}
\address{Institute of Logic and Cognition and Department of Philosophy\\Sun Yat-Sen University\\Guangzhou, China}
\email{wwang.cn@gmail.com}

\author[Yang]{Yue Yang}
\address{Department of Mathematics\\National University of Singapore\\Singapore 119076}
\email{matyangy@nus.edu.sg}

\thanks{Chong's research was partially supported by NUS grants C-146-000-042-001 and WBS : R389-000-040-101.
Wang's research was partially supported by China NSF Grant 11971501.
Yang's research was partially supported by MOE2019-T2-2-121.
All the authors acknowledge the support of the NUS Institute for Mathematical Sciences where the work began.}

\subjclass[2010]{Primary 03B30, 03F35, 03D80; Secondary 05D10}
\keywords{Reverse mathematics, $\Pi^1_1$-conservation, $P\Sigma^0_1$, Ramsey's Theorem for Trees}

%\subjclass[2000]{}
%\keywords{Turing degrees, generic}

\begin{document}

\begin{abstract}
Let $\TT^1$ be the combinatorial principle stating that every finite coloring of the infinite full binary tree has a homogeneous isomorphic subtree. Let $\RT^2_2$ and $\mathsf{WKL}_0$ denote respectively the principles of Ramsey's theorem for pairs and weak K\"onig's lemma.
It is proved that $\TT^1+\RT^2_2+\mathsf{WKL}_0$ is $\Pi^0_3$-conservative over the base system $\RCA$.
Thus over $\RCA$, $\TT^1$ and Ramsey's theorem for pairs prove the same $\Pi^0_3$-sentences.
\end{abstract}

\maketitle

\section{Introduction} \label{s:introduction}
Let $\RCA$ be the system in the language of second-order arithmetic consisting of the usual arithmetic laws, 
 the recursive comprehension axiom, and the $\Sigma_1$-induction scheme.
The main objective of reverse mathematics is to investigate the proof-theoretic strength of a mathematical theorem 
 formulated in this language over a base system such as $\RCA$.
This approach has proved to be very fruitful as can be seen from numerous examples in the literature.
Of particular relevance to the subject of this paper are weak K\"onig's lemma ($\mathsf{WKL}_0$) and Ramsey's theorem for pairs ($\RT^2_2$), 
 two theorems which, when coupled with $\RCA$, 
 constitute major subsystems of second-order arithmetic. $\mathsf{WKL}_0$ states that every infinite binary tree has an infinite path.
Its proof-theoretic strength was shown by Harrington (unpublished, see Simpson \cite{Sim2009}) to be $\Pi^1_1$-conservative over $\RCA$,
 i.e.~every $\Pi^1_1$-sentence provable in $\mathsf{WKL}_0$ is already provable in $\RCA$.
$\RT^2_2$ states that for every two-coloring of pairs of natural numbers, 
 there is an infinite set in which all pairs of numbers have the same color.
It was shown by Hirst \cite{Hirst1987} that $\RCA+\RT^2_2$ implies the $\Sigma_2$-bounding scheme $B\Sigma_2$ 
 whose strength lies strictly between $\Sigma_1$- and $\Sigma_2$-induction (Paris and Kirby \cite{KP1978}).
Principles related to, or inspired by, 
 $\RT^2_2$ are arguably the most intensively studied combinatorial principles in reverse mathematics in the past 25 years.
It is a long standing open problem whether, over $\RCA$, $\RT^2_2$ is $\Pi^1_1$-conservative over $B\Sigma_2$.
A significant step towards resolving the open problem was taken by Patey and Yokoyama \cite{PY} 
 who showed that $\RT^2_2+\mathsf{WKL}_0$ is $\Pi^0_3$-conservative over $\RCA$.
This result motivated the study of the corresponding problem, discussed in this paper, on finite coloring of the full binary tree.

Let $\TT^1$ be the principle stating 
 that every finite coloring of nodes in the full infinite binary tree has an isomorphic subtree that is homogeneous (or monochromatic),
 i.e.~a tree in which all nodes have the same color. 
$\TT^1$ is a deceptively simple principle: Given an instance of $\TT^1$ in a standard model of $\RCA$,
 i.e.~one whose first order part is the set of standard natural numbers, 
 a straightforward argument shows 
  that above some fixed node there is a dense set of nodes computable in the coloring (viewed as a function on the full binary tree)
  with the same color, and this offers an immediate solution to the instance.
However, as proved in Corduan, Groszek and Mileti \cite{CGM2009}, 
 the picture is very different for $\TT^1$ in a model of $\RCA$ in which $\Sigma_2$-induction ($I\Sigma_2$) does not hold.

Recent studies show that $\TT^1$ shares an important property with $\RT^2_2$ (Chong, Li, Wang and Yang \cite{CLWY}, Chong, Slaman and Yang \cite{CSY2017}):
Over $\RCA$, the inductive strength of $\TT^1$ and $\RT^2_2$ each lies strictly below $I\Sigma_2$. 
In fact, by combining  the constructions in \cite{CLWY} and \cite{CSY2017}, one can derive the same result  for 
$\TT^1+\RT^2_2+\mathsf{WKL}_0$, although the  two coloring principles  are long known to imply $B\Sigma_2$.
On the other hand, it was shown in \cite{CLWY} that $\RCA+\TT^1$ is $\Pi^1_1$-conservative over $P\Sigma_1+B\Sigma_2$, 
 where $P\Sigma_1$ is a $\Pi^0_3$-sentence shown by Kreuzer and Yokoyama \cite{KY} to be equivalent to a number of principles,  
 including the totality of the Ackermann function as well as the bounded monotone enumeration principle $\mathsf{BME}_1$,
 introduced in Chong, Slaman and Yang \cite{CSY2014} for the separation of stable Ramsey's theorem principle $\mathsf{SRT}^2_2$ from $\RT^2_2$.
It is not known if $\RCA+\RT^2_2$ is $\Pi^1_1$-conservative over $P\Sigma_1+B\Sigma_2$.

In this paper, we  show in Theorem \ref{thm:conservation-1}  that $\TT^1+\RT^2_2+\mathsf{WKL}_0$ is $\Pi^0_3$-conservative over $\RCA$.
This exhibit yet another common property  shared by  $\TT^1$ and $\RT^2_2$.
It follows (Corollary \ref{cor:TT1-PSigma1}) that this system does not imply $P\Sigma_1$.

Apart from being of independent interest,  Theorem \ref{thm:conservation-1}  also sheds light on the problem comparing the
strengths of $\TT^1$ and $\RT^2_2$ over $\RCA$.
First, since $\TT^1$ holds  in every model of $\RCA+I\Sigma_2$, 
 one concludes  immediately that $\RCA+\TT^1\not\rightarrow\RT^2_2$. 
The converse  is not known.  One may investigate the problem  from the angle of relative conservation strength. 
Thus, if $\TT^1$ is not $\Pi^0_3$-conservative over $\RCA$, then it would follow that $\RT^2_2$ does not imply $\TT^1$ over $\RCA$. 
Theorem \ref{thm:conservation-1} shows that this approach does not resolve the problem.

Our strategy  to establish Theorem \ref{thm:conservation-1} 
  is broadly speaking  similar in outline  to that in  Ko{\l}odziejczyk and Yokoyama \cite{KoYo}, although the details are naturally different.
In Section \ref{s:persistence}, we prove a combinatorial result (Theorem \ref{thm:large-prehomogeneous-trees})
 which states that if a ``very large'' finite tree is finitely colored then it contains an almost homogeneous subtree
which is ``sufficiently large''.
The combinatorial theorem is then applied in Section \ref{s:conservation} to prove the conservation result.

We conclude this section by fixing or recalling some standard notations.
Given a set $X$, let $[X]^n$ denote the collection of $n$-element subsets of $X$.
When $n = 2$, elements of $[X]^2$ are identified with ordered pairs $(x,y)$ where $x < y$.
Greek letters $\rho, \sigma, \tau, \eta, \ldots$ denote finite binary strings.
Given $\sigma$ and $\tau$, write $\sigma \preceq \tau$ (respectively $\sigma \prec \tau$) if $\sigma$ is a (respectively proper) initial segment of $\tau$.
The length of $\sigma$ is denoted  $|\sigma|$, and the initial segment of $\sigma$ of length $n$ (if it exists) is denoted $\sigma \upharpoonright n$.
Two strings are \emph{compatible} if one is an initial segment of the other.  A \emph{tree} is a set of finite strings closed under initial segments.
A subset $S$ is \emph{compatible} with $\sigma$ if all members of $S$ are compatible with $\sigma$.
By abuse of notation, we write $2^i$  for both $2^i$ as a natural number and as the set of all strings of length $i$, which will be clear from the context to which object the notation refers.

Subsets of $\mathbb{N}$ are denoted $X, Y, Z, X_i, Y_i, Z_i$ etc.  Very often, we will restrict our attention to a finite section $\{\sigma\in 2^{<\omega}: |\sigma|\in X\}$ of a ``large'' finite set $X$, large
in the sense of Ketonen and Solovay \cite{KS1981}.
Recall that a nonempty set $X$ is \emph{$\omega$-large} if $|X| > \min X$.
Given $x$, $X$ and $Y$, write $X < Y$ if   $X = \emptyset \neq Y$ or $\max X < \min Y$, and write $x < X$ (respectively $X < x$) if $\{x\} < X$ (respectively $X < \{x\}$).
Given $n\in\mathbb{N}$, a \emph{stack} of sets $\{X_m: m < n\}$ (of size $n$) is a collection of nonempty finite sets such that $X_m < X_{m+1}$ for $m < n-1$.
We say that $X$ is \emph{$\omega^{d} \cdot n$-large} if $X$ is the union of a stack of $n$ many $\omega^d$-large sets;
and $X$ is \emph{$\omega^{d+1}$-large} if $X = \{\min X\} \cup X_1$ where $\min X_1 > \min X$ and $X_1$ is $\omega^d \cdot \min X$-large.
The following property of $\alpha$-largeness will be used, sometimes implicitly, in the subsequent discussion:

\begin{fact} \label{fac:large-inj}
If $X$ is $\omega^n$-large and $f: X \to Y$ is an injection such that $f(x) \leq x$ for all $x$, then $Y$ is also $\omega^n$-large.

In particular, a superset of an $\omega^n$-large set is also $\omega^n$-large.
\end{fact}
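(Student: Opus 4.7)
The plan is to establish a common strengthening of both halves: whenever $X = \{x_0 < \cdots < x_{m-1}\}$ is $\omega^n$-large and $Y = \{y_0 < \cdots < y_{l-1}\}$ satisfies $l \ge m$ and $y_i \le x_i$ for all $i < m$, then $Y$ is also $\omega^n$-large. This subsumes the fact. For the injection version, the $i+1$ distinct images $f(x_0), \ldots, f(x_i)$ all lie at or below $x_i$, so the $i$-th smallest element of $f(X) \subseteq Y$ is at most $x_i$, which in turn bounds $y_i$ from above. For the superset clause, one just takes $f$ to be the inclusion.

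The strengthening is proved by induction on $n$. The base case $n = 1$ is immediate, since $l \ge m > x_0 \ge y_0 = \min Y$. For the inductive step $n \mapsto n+1$, unpack the defining stack of $X \setminus \{x_0\}$ as $X^{(1)} < \cdots < X^{(x_0)}$ with each $X^{(j)}$ being $\omega^n$-large, and let $M_j$ be the cumulative size of the first $j$ blocks. Define $Y^{(j)} := \{y_{M_{j-1}+1}, \ldots, y_{M_j}\}$ for $j < x_0$, and absorb the surplus elements into the top block by setting $Y^{(x_0)} := \{y_{M_{x_0-1}+1}, \ldots, y_{l-1}\}$. Each $Y^{(j)}$ is pointwise dominated by the corresponding $X^{(j)}$ and has at least as many elements, so the induction hypothesis at level $n$ makes it $\omega^n$-large; the disjoint index ranges ensure $Y^{(1)} < \cdots < Y^{(x_0)}$ is genuinely a stack. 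This already exhibits $Y \setminus \{y_0\}$ as $\omega^n \cdot x_0$-large. Since $y_0 \le x_0$, collapse the final $x_0 - y_0 + 1$ blocks into a single block, which remains $\omega^n$-large as a superset of $Y^{(y_0)}$ (again by the induction hypothesis at level $n$, via the inclusion), thereby presenting $Y \setminus \{y_0\}$ as $\omega^n \cdot y_0$-large, and hence $Y$ as $\omega^{n+1}$-large.

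The genuinely delicate step is formulating the right common strengthening at the outset. It must be robust enough that (i) the extra codomain elements of $Y$ beyond $|X|$ can be absorbed harmlessly into the top block during the stack decomposition, and (ii) the superset case falls out as a free byproduct rather than requiring a separate argument. Once the pointwise-domination formulation is in place, the induction runs on autopilot and the only remaining care concerns block counts that shrink when $y_0 < x_0$, which is precisely what the block-collapsing step at the end of the inductive step handles.
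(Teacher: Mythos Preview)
The paper states this as a \emph{Fact} without proof, so there is no argument in the paper to compare against. Your proof is correct and follows the standard route for this kind of monotonicity result: reduce to the pointwise-domination formulation (if $|Y| \ge |X|$ and the $i$-th element of $Y$ is at most the $i$-th element of $X$, then $Y$ inherits $\omega^n$-largeness from $X$), and then induct on $n$, slicing $Y \setminus \{y_0\}$ along the same index blocks that witness the $\omega^n \cdot x_0$-largeness of $X \setminus \{x_0\}$. The reduction step (the $i+1$ distinct images $f(x_0),\ldots,f(x_i)$ all lie below $x_i$, hence $y_i \le x_i$) and the block-collapse at the end (merging the top $x_0 - y_0 + 1$ blocks when $y_0 < x_0$) are both handled cleanly.

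One cosmetic remark: your base case should really be $n=1$ in the paper's indexing (since the paper's recursion bottoms out at $\omega$-largeness, not $\omega^0$-largeness), which is what you wrote; and the degenerate situation $\min X = 0$ is a wrinkle in the paper's definitions rather than in your argument, so you need not worry about it.
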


Let $\exp(x) = 2^x$ and $\exp^{n+1}(x) = \exp(\exp^n(x))$.
The following useful bound is given in \cite[\S 2]{KoYo}.

\begin{fact}\label{fac:omega3-large}
If $X$ is $\onomega^3$-large then $|X| > \exp^{\min X}(\min X)$.
\end{fact}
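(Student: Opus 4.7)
The plan is to unwind the definition of $\omega^3$-largeness layer by layer and to track how the minima of the constituent sets grow through the stacks. Write $m=\min X$. By definition, $X\setminus\{m\}$ is the union of a stack $Y_1<Y_2<\cdots<Y_m$ of $\omega^2$-large sets with $\min Y_1>m$, and each $Y_i$ in turn decomposes as $\{\min Y_i\}\cup Z_i$ where $Z_i$ is the union of a stack of $\min Y_i$ many $\omega$-large sets.

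The workhorse will be a short chain of elementary size estimates, which I would prove in order. First, any $\omega$-large set whose minimum is $\geq j$ has size $\geq j+1$ and hence maximum $\geq 2j$. Second, iterating this bound through the $k$ stacked $\omega$-large sets making up an $\omega^2$-large set $Y$ with $\min Y=k$ shows that their minima roughly double at each step, so $\max Y\geq 2^k(k+2)-2$ and, summing the sizes of the pieces, $|Y|\geq 2^k$.

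Now write $b_i=\min Y_i$. Then $b_1\geq m+1$ and, since $b_{i+1}>\max Y_i$, the $\omega^2$ bound yields $b_{i+1}\geq 2^{b_i}$. An induction on $i$ therefore gives $b_i\geq \exp^{i-1}(m+1)$ for $1\leq i\leq m$. Applying the $\omega^2$ size estimate once more, this time to $Y_m$, yields
\[
|Y_m|\;\geq\;2^{b_m}\;\geq\;2^{\exp^{m-1}(m+1)}\;=\;\exp^{m}(m+1),
\]
so $|X|\geq 1+|Y_m|>\exp^m(m)$, as required. The trivial cases $m\in\{0,1\}$ can be checked by hand or absorbed into the same inequality.

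The main obstacle is purely bookkeeping: one must be careful that the ``$\geq$'' inequalities chain together in the exponents without losing a factor, and in particular that the bound $|Y|\geq 2^{\min Y}$ for $\omega^2$-large $Y$ is strong enough to drive the iterated-exponential recursion $b_{i+1}\geq 2^{b_i}$. Once the intermediate lemma on $\omega^2$-large sets is isolated, nothing conceptually new is needed and the final estimate is immediate.
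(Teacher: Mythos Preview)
Your argument is correct. The paper itself does not supply a proof of this fact; it merely records the bound and attributes it to \S2 of Ko\l odziejczyk--Yokoyama. So there is no ``paper's proof'' to compare against, and what you have written is a clean, self-contained verification.

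For the record, the chain of estimates checks out: for an $\omega$-large $W$ one has $\max W\ge 2\min W$; iterating through the $k$ stacked $\omega$-large pieces of an $\omega^2$-large $Y$ with $\min Y=k$ gives $\max Y\ge 2^k(k+2)-2\ge 2^k$ and $|Y|\ge 1+(k+2)(2^k-1)\ge 2^k$ for $k\ge1$; hence the minima $b_i=\min Y_i$ of the $\omega^2$-large blocks in the $\omega^3$ decomposition satisfy $b_{i+1}>\max Y_i\ge 2^{b_i}$, and the tower $b_m\ge\exp^{m-1}(m+1)$ follows. Feeding this into $|Y_m|\ge 2^{b_m}$ yields $|X|>\exp^m(m+1)>\exp^m(m)$. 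The only care needed is that the main induction requires $b_i\ge1$, which holds since $b_1\ge m+1$; the degenerate case $m=0$ is vacuous as you note.
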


In Section \ref{s:persistence}, we introduce two  notions of largeness for finite trees: persistence and superpersistence. 
A tree with either of these properties is large in the sense that, even after a thinning operation, 
 the resulting tree continues to contain a subtree that is homogeneous for a given coloring and of sufficiently large size.

\section{Persistence and a key combinatorial theorem} \label{s:persistence}
%\section{Prehomogeneous Trees} \label{s:persistence}

This section is devoted to proving Theorem \ref{thm:large-prehomogeneous-trees} 
 which is used in an essential way to establish our main result in the next section. 
The proof of Theorem \ref{thm:large-prehomogeneous-trees} applies a number of combinatorial lemmata concerning a largeness notion for trees called {\it persistence} and its generalization {\it superpersistence}.

For a model $\M\models \RCA$, we let $\mathbb{N}$ denote the first-order part of $\M$ and call it the set of natural numbers (in the model).
Reference to standard natural numbers will be explicitly stated to avoid any possible ambiguity.
The reader familiar with fragments of Peano arithmetic will observe that the discussion in this section can be carried out in any model of $\RCA$.

We begin with defining some notions concerning trees.

\begin{definition} \label{def:q-strong-tree}
Let $X= \{x_0 < \ldots < x_n\}$ be a subset of $\mathbb{N}$. A finite binary tree $T$ is \emph{$X$-quasistrong}  if
\begin{enumerate}
 \item [(i)] $T \cap 2^{x_i} \neq \emptyset$ for every $x_i \in X$;
 \item [(ii)] for each $i <n$, every $\sigma \in T \cap 2^{x_i}$ has exactly two incompatible extensions in $T \cap 2^{x_{i+1}}$.
\end{enumerate}
\end{definition}

Intuitively, the nodes on an $X$-quasistrong tree are required to split in a well-controlled manner determined by  $X$.
More precisely, every node of length $x_i$ has incompatible extensions of length $x_{i+1}$.
This allows one to measure the largeness of such trees in terms of the $\alpha$-largeness of $X$.

\begin{definition} \label{def:prehomogeneous}
Given $X = \{x_0 < \ldots < x_n\} \subseteq \mathbb{N}$ and a coloring $C$ of $2^{<\mathbb{N}}$, a tree $T \subseteq 2^{<\mathbb{N}}$ is \emph{$(C,X)$-prehomogeneous for color $c$}
if for every $i<n$, $\sigma \in T \cap 2^{x_i}$ and $\sigma \prec \tau \in T \cap 2^{x_{i+1}}$, there exists a $\zeta\in T$ such that $\sigma \preceq \zeta \preceq \tau$ and $C(\zeta) = c$.
A tree is \emph{$(C,X)$-prehomogeneous} if it is $(C,X)$-prehomogeneous for some color.
\end{definition}

The main combinatorial theorem states:

\begin{theorem} \label{thm:large-prehomogeneous-trees}
Let $d,k\in\mathbb{N}$. If $X$ is an $\onomega^{2d+1}$-large finite set with $\min X > \max\{4,k\}$ and $C$ is a $k$-coloring of an $X$-quasistrong finite binary tree $T$,
then there exist an $\omega^d$-large $Y \subseteq X$ and an $S \subseteq T$ such that $S$ is both $Y$-quasistrong and $(C,Y)$-prehomogeneous.
\end{theorem}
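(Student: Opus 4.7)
The plan is to induct on $d$. The base case $d=0$ is immediate: $X$ is $\omega$-large, so taking $Y = \{\min X\}$ (which is $\omega^0$-large) and letting $S$ consist of a single node of $T$ at level $\min X$ together with its initial segments works; the prehomogeneity condition is vacuous since $|Y| = 1$.

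For the inductive step from $d$ to $d+1$, set $m = \min X > \max\{4,k\}$. Decompose $X = \{m\} \cup \bigcup_{i<m} X'_i$ as a stack of $\omega^{2d+2}$-large pieces, and further decompose each $X'_i = \{n_i\} \cup \bigcup_{j<n_i} X''_{i,j}$ as a stack of $n_i \geq m$ many $\omega^{2d+1}$-large sub-pieces. For each $X''_{i,j}$, I would apply the inductive hypothesis to the restriction of $T$ to that block, yielding an $\omega^d$-large $Y_{i,j} \subseteq X''_{i,j}$ together with a prehomogeneous $Y_{i,j}$-quasistrong subtree of some color $c_{i,j} \in \{0,\ldots,k-1\}$. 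Since the total block count is at least $m^2 \geq km$, a single application of pigeonhole to the colors $c_{i,j}$ produces at least $m$ blocks sharing a common color $c^*$. Taking $m$ of these same-color pieces (in order) and stacking them above $\{m\}$ yields a set $Y \subseteq X$ of the form required for $\omega^{d+1}$-largeness, with the target common color being $c^*$.

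The principal obstacle is coherence of the subtree: the individually chosen $S_{i,j}$'s must mesh into a single $Y$-quasistrong $S \subseteq T$, which means that every node of $S$ at a level of $Y$ has two incompatible extensions in $S$ at the next level of $Y$, with $c^*$ appearing on each resulting path --- and this must succeed on the ``gap'' levels of $X$ lying between the chosen blocks as well as inside them. To handle this, the induction's conclusion needs to be strengthened into a form that produces, uniformly above every node at each boundary level, a prehomogeneous continuation. The notions of \emph{persistence} and \emph{superpersistence} introduced earlier in this section are designed precisely to package such uniformity, and I would run the induction on the persistence strengthening rather than on the theorem as stated; the factor of $2$ in the exponent $2d+1$ reflects the cost of maintaining both the uniformity at the boundary and the branching quasistrong structure through each inductive step.
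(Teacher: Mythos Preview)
Your proposal is correct and follows the paper's approach: you rightly diagnose that the naive induction fails for lack of coherence across the many subtrees sitting above each boundary level, and you correctly pivot to proving the superpersistence strengthening by induction on $d$, which is precisely Lemma~\ref{lem:superpersistent-large} (from which the theorem follows in one line). The paper executes that induction via the stacking Lemmata~\ref{lem:superpersistent-stack-naive}--\ref{lem:superpersistent-induction}, with the base case $d=1$ supplied by the Ku\v{c}era--G\'acs-style density and counting arguments of Lemmata~\ref{lem:Kucera-Gacs}--\ref{lem:prehom-multiple} culminating in Corollary~\ref{cor:superpersistent-omega}; your outline defers this technical core to the earlier machinery, as does the paper.
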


The proof of the theorem requires the use of two types   of  tree thinning operation.
The notion of persistence is related to the first type of thinning which is induced by coloring of the leaves.

\begin{definition} \label{def:persistent}
Let $n, d, k\ge 1$, $\alpha=\onomega^d \cdot n$ and let $X$ be a nonempty finite subset of $\N$. Then
\begin{enumerate}
\item [(i)] $X$ is \emph{$(\alpha,k,0)$-persistent}  if $X$ is $\alpha$-large;

\item [(ii)]  $X$ is \emph{$(\alpha,k,i)$-persistent}, for $i\geq 1$,
 if $X$ contains an $(\alpha,k,i-1)$-persistent subset $Y$ 
  such that for any $X$-quasistrong tree $T$ and any coloring of the leaves $C: T \cap 2^{\max X} \to k$,
  there exist a $c < k$ and a $Y$-quasistrong finite tree $S \subseteq T$, 
   such that every leaf of $S$ has an extension in $C^{-1}(c)$.
\end{enumerate}
\end{definition}

From now on, $\alpha$ is always of the form $\onomega^d \cdot n$.
The persistence property bears some resemblance to that of $\alpha$-largeness:

\begin{proposition} \label{fac:persistent} \ 
\begin{enumerate}
 \item [(i)] If $k' \geq k$, $i' \geq i$ and $X$ is $(\alpha,k',i')$-persistent, then $X$ is $(\alpha,k,i)$-persistent.
 \item [(ii)]  Every $(\alpha,k,i)$-persistent set is $\alpha$-large.
 \item [(iii)] Every superset of an $(\alpha,k,i)$-persistent set is $(\alpha,k,i)$-persistent.
 \item [(iv)]  Suppose that $X$ is $(\alpha,k,i)$-persistent, $f: X \to Y$ is injective and $f(x) \leq x$ for all $x \in X$.
  Then $Y$ is also $(\alpha,k,i)$-persistent.
\end{enumerate}
\end{proposition}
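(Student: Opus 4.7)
The plan is to prove the four parts in the order (ii), (iii), (iv), (i), each by induction on $i$ and exploiting the recursive structure of Definition \ref{def:persistent}. For (ii), the base case $i = 0$ is immediate from the definition, and in the inductive step the $(\alpha,k,i-1)$-persistent witness $X_1 \subseteq X$ from the definition is $\alpha$-large by the inductive hypothesis, whence $X \supseteq X_1$ is $\alpha$-large by the standard superset closure of $\alpha$-largeness.

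For (iii), suppose $X' \supseteq X$ with $X_1 \subseteq X$ witnessing $(\alpha,k,i)$-persistence of $X$. Given any $X'$-quasistrong tree $T'$ and $k$-coloring $C'$ of its leaves, I first prune $T'$ to an $X$-quasistrong $T \subseteq T'$ by selecting, at each level $x_j \in X \subseteq X'$, exactly two incompatible extensions of each chosen node (these exist because $X'$-quasistrongness repeatedly doubles descendant counts at intermediate $X'$-levels). Fixing an extension map $\sigma \mapsto \sigma^{*}$ from leaves of $T$ into leaves of $T'$ converts $C'$ into a $k$-coloring $C$ of $T$'s leaves. Applying the $(\alpha,k,i)$-persistence of $X$ to $(T,C)$ yields a color $c$ and an $X_1$-quasistrong $S \subseteq T$ whose leaves have $c$-extensions in $T$; these lift through $\sigma \mapsto \sigma^{*}$ to $c$-colored leaves of $T'$, so $X_1$ itself witnesses $(\alpha,k,i)$-persistence of $X'$.

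For (iv), set $Y_0 = f(X)$; by (iii) it suffices to show $Y_0$ is $(\alpha,k,i)$-persistent. Writing $X = \{x_0 < \ldots < x_n\}$ and $Y_0 = \{y_0 < \ldots < y_n\}$, the $j+1$ distinct values $f(x_0), \ldots, f(x_j)$ all lie in $[0,x_j]$, whence $y_j \leq x_j$; so the order-preserving bijection $g \colon X \to Y_0$ with $g(x_j) = y_j$ satisfies $g(x) \leq x$, and we may replace $f$ by $g$ on the image. The base case is Fact \ref{fac:large-inj}. For the inductive step, given a $Y_0$-quasistrong tree $T$ and $k$-coloring $C$ of its leaves, I build an $X$-quasistrong tree $T'$ via maps $\Phi_j \colon T \cap 2^{y_j} \to T' \cap 2^{x_j}$ defined level by level: for each pair of siblings $\tau_0, \tau_1 \in T \cap 2^{y_{j+1}}$ above some $\sigma \in T \cap 2^{y_j}$, choose two incompatible length-$x_{j+1}$ extensions of $\Phi_j(\sigma)$, which is possible because $x_{j+1} > x_j$ supplies at least one fresh bit. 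Transporting $C$ along $\Phi_n$ to a coloring $C'$ of $T'$'s leaves, invoking $X$'s persistence on $(T',C')$, and pulling back along $\Phi$ yields a $g(X_1)$-quasistrong subtree of $T$ with the required monochromatic extensions, where $g(X_1)$ is $(\alpha,k,i-1)$-persistent by the inductive hypothesis of (iv) applied to $g|_{X_1}$.

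For (i), I handle $k' \geq k$ and $i' \geq i$ separately. To reduce $k$: a $k$-coloring is also a $k'$-coloring, and the color $c$ delivered by $(\alpha,k',i')$-persistence must satisfy $c < k$, since leaves of the witness subtree need extensions in $C^{-1}(c)$ while $C$ assigns no color $\geq k$. To reduce $i$: by induction on $i'$, if $X_1$ witnesses $(\alpha,k,i')$-persistence of $X$ then $X_1$ is $(\alpha,k,i-1)$-persistent for every $i \leq i'$ (via the inductive hypothesis on $X_1$, together with (ii) for the case $i = 0$), and the same tree property of $X$ at the outer level immediately witnesses $(\alpha,k,i)$-persistence. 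The main technical care lies in (iv), where the tree-encoding $\Phi$ must be built consistently across levels so that $T'$ genuinely is $X$-quasistrong; here the order-preserving reduction to $g$, made legal by the counting bound $y_j \leq x_j$, is what allows the greedy level-by-level construction to go through.
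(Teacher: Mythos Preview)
Your proof is correct and follows essentially the same approach as the paper: parts (iii) and (iv) are handled by pruning/transferring trees along the inclusion or bijection and inducting on $i$, with the same witness carried over. The only cosmetic differences are that the paper dismisses (i) and (ii) as ``immediate'' while you spell out the easy inductions, and in (iv) you explicitly replace $f$ by the order-preserving bijection $g$ via the counting bound $y_j \le x_j$, whereas the paper simply asserts the existence of an abstract level-by-level isomorphism between a $Y$-quasistrong tree and an $X$-quasistrong tree once $f$ is taken bijective; your reduction to $g$ makes that isomorphism concrete but is not a different idea.
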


\begin{proof}
(i) and (ii) are immediate.

(iii). Suppose  $X \supseteq Y$ and $Y$ is $(\alpha,k,i)$-persistent. 
The case where $i = 0$ follows from Fact \ref{fac:large-inj}.
Assume that $i > 0$.  
Let $Z \subseteq Y$ witness $Y$ being $(\alpha,k,i)$-persistent as in Definition \ref{def:persistent}, 
 in particular, $Z$ is $(\alpha,k,i-1)$-persistent.
Let $T$ be an $X$-quasistrong tree and $C: T \cap 2^{\max X} \to k$.
Let $T_Y = T \cap 2^{\leq \max Y}$. 
Then, we may assume that $T_Y$ is $Y$-quasistrong (by deleting some nodes if necessary).
For $\sigma$ a leaf of $T_Y$, let $\zeta(\sigma) \in T \cap 2^{\max X}$ be the leftmost extension of $\sigma$.
Define
$$
  C_Y(\sigma) = C(\zeta(\sigma)).
$$
By the persistence of $Y$, 
 there exist $S$ and $c < k$ such that $S$ is a $Z$-quasistrong subtree of $T_Y$ and every leaf of $S$ has an extension in $C_Y^{-1}(c)$.
By the definition of $C_Y$, every leaf of $S$ has an extension in $C^{-1}(c)$.
Hence $Z$ witnesses $X$ being $(\alpha,k,i)$-persistent.

(iv).  By (iii), we may assume that $f$ is bijective.  
We prove (iv) by induction on $i$.
The case where $i = 0$ follows from Fact \ref{fac:large-inj}.
Below, assume that $i > 0$.

Let $Z \subseteq X$ witness $X$ being $(\alpha,k,i)$-persistent.  
We prove that $f(Z)$ witnesses $Y$ being $(\alpha,k,i)$-persistent.
As $Z$ is $(\alpha,k,i-1)$-persistent, $f(Z)$ is $(\alpha,k,i-1)$-persistent as well by induction hypothesis.
Let $T$ be a $Y$-quasistrong tree. 
As $f$ is bijective, 
 there is an $X$-quasistrong tree $T_X$ and a bijection $g$
  such that $g: T_X \cap \bigcup_{x \in X} 2^x \to T \cap \bigcup_{y \in Y} 2^y$ 
   and $\sigma \prec \tau$ in $T_X$ if and only if $g(\sigma) \prec g(\tau)$ in $T$.
Thus $g$ is an isomorphism in a sense (after we ignore some irrelevant nodes).
For $C: T \cap 2^{\max Y} \to k$, define $C_X: T_X \cap 2^{\max X} \to k$ by
$$
  C_X(\sigma) = C(g(\sigma)).
$$
Since $Z$ witnesses the persistence of $X$, 
 there exist $c < k$ and $S_X$ such that $S_X$ is a $Z$-quasistrong subtree of $T_X$ and each leaf of $S_X$ has an extension in $C_X^{-1}(c)$.
Let
$$
  S = \{\sigma \in T: \sigma \text{ has an extension in } g(S_X)\}.
$$
Then $S$ is $f(Z)$-quasistrong and each of its leaves has an extension in $C^{-1}(c)$.
\end{proof}

To prove the existence of $(\onomega,\cdot,\cdot)$-persistent sets, 
 we begin with a lemma whose proof is inspired by an idea from Ku\u{c}era-G\'{a}cs coding (\cite{Kuc1984} \cite{Gac1986}) in algorithmic randomness.

\begin{lemma} [Ku\u{c}era-G\'{a}cs coding] \label{lem:Kucera-Gacs}
There exists a primitive recursive function $g: \mathbb{N} \times \mathbb{Q} \to \mathbb{N}$ such that
if $n = g(m,\delta) \geq m \geq 1 \geq \delta > 0$ and $X \in [\mathbb{N}]^{\geq n}$,
then there exists a  $Y \in [X]^m$ such that
\begin{enumerate}
 \item [(i)]  $\min Y = \min X$, and
 \item [(ii)] for every $X$-quasistrong tree $T$ and every $A \subseteq T \cap 2^{\max X}$ with $|A| \geq \delta |T \cap 2^{\max X}|$,
 there exists a $Y$-quasistrong tree $S$ such that $S \subseteq T$ and every leaf of $S$ has an extension in $A$.
\end{enumerate}
\end{lemma}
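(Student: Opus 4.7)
The plan is to build $Y$ and $S$ level by level by a greedy density argument, the combinatorial analogue of Ku\u{c}era--G\'{a}cs coding in algorithmic randomness. Enumerate $X = \{x_0 < x_1 < \cdots\}$ and, for $\tau \in T$ at level $x_i$, define
\[
 \rho(\tau) = \frac{|\{\zeta \in A : \tau \preceq \zeta\}|}{2^{|X|-1-i}} \in [0,1].
\]
Since $T$ is $X$-quasistrong, every $\sigma \in T \cap 2^{x_i}$ has exactly two children $\sigma_0, \sigma_1$ at level $x_{i+1}$ in $T$ and $\rho(\sigma) = (\rho(\sigma_0) + \rho(\sigma_1))/2$; hence descending $j$ $X$-levels from $\sigma$ produces $2^j$ descendants whose $\rho$-values sum to $2^j \rho(\sigma)$. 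Moreover, the hypothesis $|A| \geq \delta|T \cap 2^{\max X}|$ is precisely the statement that the average of $\rho$ over $T \cap 2^{x_0}$ is at least $\delta$.

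Fix thresholds $\epsilon_i = \delta/2^i$ for $0 \leq i < m$. Pick some $\sigma_0 \in T \cap 2^{\min X}$ with $\rho(\sigma_0) \geq \delta = \epsilon_0$, set $y_0 = \min X$ and $S \cap 2^{y_0} = \{\sigma_0\}$, and iterate: given $S \cap 2^{y_i}$ consisting of $2^i$ nodes each of density $\geq \epsilon_i$, pick $y_{i+1}$ sufficiently far above $y_i$ in $X$ and, for each such $\sigma$, place into $S \cap 2^{y_{i+1}}$ two descendants of $\sigma$ at level $y_{i+1}$ of density $\geq \epsilon_{i+1}$. The quantitative input is a one-line averaging estimate: if $\rho(\sigma) \geq \epsilon$, then the number $k$ of descendants of $\sigma$ at $j$ $X$-levels below with $\rho \geq \epsilon'$ satisfies $2^j \epsilon \leq k + (2^j - k)\epsilon'$, whence $k \geq 2^j(\epsilon - \epsilon')/(1-\epsilon')$. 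With $(\epsilon,\epsilon') = (\epsilon_i, \epsilon_{i+1})$ this forces $k \geq 2$ as soon as $j \geq j_i := i + 2 + \lceil \log_2(1/\delta) \rceil$.

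Setting $g(m, \delta) = 1 + \sum_{i<m-1} j_i$ gives a visibly primitive recursive function of $m$ and $\delta$, and any $X \in [\N]^{\geq g(m,\delta)}$ has enough room to take $y_{i+1}$ to be the element of $X$ at position $\sum_{k \leq i} j_k$; the resulting $Y = \{y_0, \ldots, y_{m-1}\}$ is a subset of $X$ of size $m$ with $\min Y = \min X$. Closing $\bigcup_{i<m}(S \cap 2^{y_i})$ under initial segments within $T$ produces a $Y$-quasistrong subtree $S \subseteq T$ whose leaves $\tau$ at level $y_{m-1}$ satisfy $\rho(\tau) \geq \epsilon_{m-1} = \delta/2^{m-1} > 0$, whence $|\{\zeta \in A : \tau \preceq \zeta\}| \geq 1$ and each such $\tau$ has an extension in $A$. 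The main difficulty here is really bookkeeping: choosing the gap sequence $(j_i)$ and thresholds $(\epsilon_i)$ so that $g(m,\delta)$ comes out genuinely primitive recursive, and verifying that the iterated choice of the $y_i$ and of the two-descendant sets is expressible using $\Sigma_1$-induction on $i < m$, so that the argument goes through in $\RCA$.
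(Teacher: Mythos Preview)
Your argument is correct and is essentially the same as the paper's: both fix $Y$ at predetermined positions in $X$, maintain a geometrically decreasing density threshold (your $\epsilon_i=\delta/2^i$ versus the paper's $2^{-e-p}$ with $2^{-e}<\delta$), and use an averaging/pigeonhole count to locate two incompatible high-density descendants at each stage. The only cosmetic differences are that the paper phrases the count as a contradiction (``if at most one child had density $>2^{-e-p-1}$ then the parent would have density $\leq 2^{-e-p}$'') while you phrase it as a Markov-type bound $k\geq 2^j(\epsilon-\epsilon')/(1-\epsilon')$, and the resulting explicit formulas for $g(m,\delta)$ differ by lower-order terms; note, however, that the paper records its specific quadratic formula \eqref{eq:pos-q-strong-f} because it is quoted verbatim in the growth estimate of Lemma~\ref{lem:persistent-existence}.
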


\begin{proof}
Let $e:=e(\delta)$ be the least integer with
\begin{equation}\label{eq:pos-q-strong-e}
  2^{-e} < \delta.
  \end{equation}
Define a sequence $(e_p)_{p\in \mathbb{N}}$ by
$$
  e_0 = 0, \ e_p = \sum_{r = 1}^{p} (e+r) = e_{p-1} + e + p.
$$
%The idea is that $e_i$ is the $i$-th level of the tree $Y$.  
Let
\begin{equation} \label{eq:pos-q-strong-f}
  g(m,\delta) = e_{m-1}= \sum_{r=1}^{m-1} (e+r) = (m-1) e + \frac{(m-1)m}{2}.
\end{equation}

Fix $n = g(m,\delta)$ and $X \in [\mathbb{N}]^{\geq n}$.  
We may further assume that $X$ has exactly $n$ elements
$$
  x_0 < x_1 < \ldots < x_{n-1}.
$$
For $i < m$, let $y_i = x_{e_i}$, and let
$$
  Y = \{y_i: i < m\}.
$$

Let $x = \max X$.  Fix an $X$-quasistrong tree $T$ and $A \subseteq T \cap 2^{x}$ such that $|A| \geq \delta |T \cap 2^{x}| > 2^{-e} |T \cap 2^{x}|$.
For each $\rho \in T$, let
$$
  T_{\rho} = \{\sigma \in T: \sigma \text{ is compatible with } \rho\}.
$$
Let $\rho \in T \cap 2^{y_0}$ be such that
$$
  |A \cap T_\rho \cap 2^{x}| > 2^{-e} |T_\rho \cap 2^x|,
$$
such $\rho$ exists because $|A| \geq \delta |T \cap 2^{x}|$ and because of the additivity of the measure.  
Let
$$
  S_0 = \{\sigma: \sigma \preceq \rho\}.
$$
Clearly $S_0$ is $\{y_0\}$-quasistrong.

Suppose that $p < m-1$, $S_p$ is a subtree of $T \cap 2^{\leq y_p}$ such that $S_p$ is $\{y_i: i \leq p\}$-quasistrong, and
\begin{equation}\label{eq:pos-q-strong-T}
  |A \cap T_\sigma \cap 2^{x}| > 2^{-e-p} |T_\sigma \cap 2^{x}|
\end{equation}
for each $\sigma \in S_p \cap 2^{y_p}$.

For each $\sigma \in S_p \cap 2^{y_p}$, there exist incompatible $\tau(\sigma,0)$ and $\tau(\sigma,1) \in T \cap 2^{y_{p+1}}$ such that
$$
  |A \cap T_{\tau(\sigma,i)} \cap 2^{x}| > 2^{-e-p-1} |T_{\tau(\sigma,i)} \cap 2^{x}|.
$$
The reason is that if not, the total number of nodes in $A\cap T_{\sigma}\cap 2^x$ would be at most
\[
t+\frac{2^{e+p+1}-1}{2^{e+p+1}}t=2t-\frac{1}{2^{e+p+1}}t
\]
where $t=|T_{\tau}\cap 2^x|$ for some (or any) $\tau \in T \cap 2^{y_{p+1}}$, contradicting
$$
  |A \cap T_\sigma \cap 2^{x}| > 2^{-e-p} |T_\sigma \cap 2^x|=2t.
$$
Let
$$
  S_{p+1} = \{\rho: \exists \sigma, i(\sigma \in S_p \cap 2^{y_p} \wedge
    i = 0,1 \wedge \rho \preceq \tau(\sigma,i))\}.
$$
Then $S_{p+1}\subseteq T$ is quasistrong with respect to  $\{y_i: i \leq p+1\}$.
It follows that $S = S_{m-1}$ satisfies the requirement.
\end{proof}

Applying the above, we now show that $(\onomega,\cdot,\cdot)$-persistent sets exist.

\begin{lemma}\label{lem:persistent-existence}
Let $g$ be as in Lemma \ref{lem:Kucera-Gacs}.
Define
$$
  \bar{g}(x,k,0) = x+k+1,\ \bar{g}(x,k,i+1) = g(\bar{g}(x,k,i),1/k).
$$
Then for each $i$,
\begin{enumerate}
 \item [(i)]     $\bar{g}(x,k,i) \leq (x+k+i+1)^{2^{i}}$ for  $x \geq k > 0$;

 \item [(ii)]  If $|X|>\bar{g}(\min X, k, i)$, then  $X$ is $(\onomega,k,i)$-persistent.
\end{enumerate}
\end{lemma}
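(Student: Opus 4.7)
The plan is to prove (i) and (ii) by induction on $i$, with (i) a routine numerical estimate and (ii) the main content, driven by Lemma \ref{lem:Kucera-Gacs} followed by a pigeonhole.

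For (i), the base case is immediate since $\bar{g}(x,k,0) = x+k+1 = (x+k+1)^{2^0}$. For the inductive step, I would expand the closed form $g(m,1/k) = (m-1)e(1/k) + \frac{(m-1)m}{2}$ from \eqref{eq:pos-q-strong-f} and note that the defining inequality $2^{-e(1/k)} < 1/k$ forces $e(1/k) \leq \lceil \log_2 k \rceil + 1 \leq k$, so $g(m, 1/k)$ is bounded by a small constant times $m^2$ once $m \geq k$, which holds when $m = \bar{g}(x,k,i) \geq x+k+1 \geq k$ under the hypothesis $x \geq k$. Feeding the inductive bound $\bar{g}(x,k,i) \leq (x+k+i+1)^{2^i}$ through this quadratic estimate produces $\bar{g}(x,k,i+1) \leq (x+k+i+2)^{2^{i+1}}$, where the $+1$ increment on the base absorbs the multiplicative constant via the factor $(1+1/(x+k+i+1))^{2^{i+1}}$.

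For (ii), the base case $i=0$ is direct: $|X| > x+k+1 > x = \min X$ makes $X$ $\omega$-large, hence $(\onomega,k,0)$-persistent. For the step, suppose $|X| > \bar{g}(x,k,i+1) = g(\bar{g}(x,k,i),1/k)$, where $x = \min X$. Apply Lemma \ref{lem:Kucera-Gacs} with parameter $m$ slightly above $\bar{g}(x,k,i)$ and $\delta = 1/k$ to extract $Y \subseteq X$ with $\min Y = x$ and $|Y| > \bar{g}(\min Y, k, i)$, enjoying the Ku\u{c}era--G\'{a}cs thinning property: for any $X$-quasistrong tree $T$ and any $A \subseteq T \cap 2^{\max X}$ with $|A| \geq (1/k)|T \cap 2^{\max X}|$, there is a $Y$-quasistrong subtree $S \subseteq T$ every leaf of which has an extension in $A$. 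By the induction hypothesis, $Y$ is $(\onomega,k,i)$-persistent. To see that $Y$ witnesses $(\onomega,k,i+1)$-persistence of $X$, take any $k$-coloring $C: T \cap 2^{\max X} \to k$ of the leaves; pigeonhole delivers a color $c$ with $|C^{-1}(c)| \geq (1/k)|T \cap 2^{\max X}|$, and the thinning property of $Y$ then yields an $S$ of the exact form demanded by Definition \ref{def:persistent}(ii).

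The main subtle point is calibrating the parameter $m$ fed into Lemma \ref{lem:Kucera-Gacs} so that the output $|Y|$ strictly exceeds $\bar{g}(\min Y, k, i)$, which is what the induction hypothesis requires. Any slack introduced here by the one-step gap between $g(\bar{g}(x,k,i),1/k)$ and $g(\bar{g}(x,k,i)+1,1/k)$ is polynomial of degree at most two in $m$ and is absorbed without loss into the crude bound in (i); no deeper idea is needed beyond bookkeeping.
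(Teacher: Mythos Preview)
Your proposal is correct and follows essentially the same route as the paper: induction on $i$, with (i) handled by expanding $g$ via \eqref{eq:pos-q-strong-f}, using $e \leq k$ to bound $g(m,1/k) < m^2$, and (ii) handled by applying Lemma~\ref{lem:Kucera-Gacs} with $\delta = 1/k$ to extract $Y$, then pigeonholing among the $k$ colors. The one point where you diverge slightly is your ``main subtle point'' about calibrating $m$: the paper avoids this entirely by proving the stronger statement with $|X| \geq \bar{g}(\min X,k,i)$ in place of strict inequality, so that taking $m = \bar{g}(\min X,k,i-1)$ exactly and obtaining $|Y| = m = \bar{g}(\min Y,k,i-1)$ suffices to invoke the induction hypothesis, and no bumping of $m$ or absorption of slack into (i) is needed.
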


\begin{proof} We prove the lemma by induction on $i$.

(i). First,
$$
  \bar{g}(x,k,0) = x+k+1 \leq (x+k+1)^{2^0}.
$$
Suppose $i>0$ and let $e$ be the least such that $2^{-e} < 1/k$.
Then $e \leq k$; and an easy induction on $i$ shows that $k< \bar{g}(x,k,i)/2$.
Hence
\begin{align*}
  \bar{g}(x,k,i) &= g(\bar{g}(x,k,i-1),1/k) \\
    &= (\bar{g}(x,k,i-1)-1) \frac{2e + \bar{g}(x,k,i-1)}{2} \mbox{\ \ \ by (\ref{eq:pos-q-strong-f})}\\
    &< \bar{g}(x,k,i-1)^{2} \mbox{\ \ \ by $2e\leq 2k<\bar{g}(x,k,i-1)$}\\
    &\le ((x+k+i)^{2^{i-1}} )^2< (x+k+i+1)^{2^{i}}.
\end{align*}

(ii). If $|X| \geq \bar{g}(\min X,k,0) \geq |\min X| + 1$, then $X$ is $\onomega$-large by definition,
and thus $(\onomega,k,0)$-persistent.

Suppose that $i>0$ and $|X| \geq \bar{g}(\min X,k,i)$.  Let $m = \bar{g}(\min X,k,i-1)$.
Let $Y \in [X]^m$ be as in the conclusion of Lemma \ref{lem:Kucera-Gacs}.
Note that $m = \bar{g}(\min Y,k,i-1)$ since $\min Y = \min X$.
By induction hypothesis, $Y$ is $(\onomega,k,i-1)$-persistent.
Let $T$ be an $X$-quasistrong tree and let $C: T \cap 2^{\max X} \to k$ be a $k$-coloring for some positive $k$.
Choose $c < k$ such that
$$
  |C^{-1}(c)| \geq |T \cap 2^{\max X}|/k.
$$
By Lemma \ref{lem:Kucera-Gacs}, there exists a $Y$-quasistrong tree $S \subseteq T$ whose leaves have extensions in $C^{-1}(c)$.
Thus $X$ is $(\onomega,k,i)$-persistent.
\end{proof}

Next, we construct $(\omega^{d+1},\cdot,\cdot)$-persistent sets through the process of stacking.
We introduce a notion   generalizing   that of quasistrongness.

\begin{definition} \label{def:stack-qstrong}
For a stack $\vec{X} = (X_m: m < n)$, a tree is \emph{$\vec{X}$-quasistrong} if it is $X_m$-quasistrong for each $m < n$.
\end{definition}

Note that an $\vec{X}$-quasistrong tree $T$ may not be $\bigcup \vec{X}$-quasistrong
since an element of $T \cap 2^{\max X_0}$ may not have distinct extensions in $2^{\min X_1}$.
However, the lack of splits between $X_i$ and $X_{i+1}$ is the only missing ingredient.

\begin{lemma} \label{lem:persistent-stack-naive}
Suppose that $i > 0$, $\vec{X} = (X_m: m < n)$ is a stack of $(\alpha,k,i)$-persistent sets and the persistence of each $X_m$ is witnessed by $Y_m$.
If $T$ is $\vec{X}$-quasistrong and $C: 2^{\max X_{n-1}} \to k$, then there exist a $c < k$ and a tree $S \subseteq T$ such that
$S$ is $(Y_m: m < n)$-quasistrong and every leaf of $S$ has an extension in $C^{-1}(c)$.
\end{lemma}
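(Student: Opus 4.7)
I plan to prove the lemma by induction on $n$, processing the topmost layer of the stack first.

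The base case $n=1$ is immediate: $T$ is $X_0$-quasistrong, so the $(\alpha,k,i)$-persistence of $X_0$ witnessed by $Y_0$, applied to $C$ restricted to $T\cap 2^{\max X_0}$, directly yields the required color $c<k$ and the $Y_0$-quasistrong $S\subseteq T$.

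For the inductive step, assume the result for stacks of size $n-1$. For each $\rho\in T\cap 2^{\min X_{n-1}}$, the subtree $T_\rho=\{\sigma\in T:\rho\preceq\sigma\}$ of $T$ rooted at $\rho$ is $X_{n-1}$-quasistrong; applying persistence of $X_{n-1}$ (witnessed by $Y_{n-1}$) to $T_\rho$ and $C$ yields a color $c_\rho<k$ and a $Y_{n-1}$-quasistrong $S_\rho\subseteq T_\rho$ whose leaves have extensions in $C^{-1}(c_\rho)$. Next, induce a $k$-coloring $C'$ on $T\cap 2^{\max X_{n-2}}$: for each $\tau$ admitting some extension $\rho\in T\cap 2^{\min X_{n-1}}$, fix one such $\rho(\tau)$ and set $C'(\tau)=c_{\rho(\tau)}$; assign any remaining $\tau$ a default color. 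Since $T\cap 2^{\leq\max X_{n-2}}$ is $(X_m:m<n-1)$-quasistrong, the inductive hypothesis produces a color $c<k$ and a $(Y_m:m<n-1)$-quasistrong $S'$ whose leaves $\eta$ at level $\max Y_{n-2}$ have extensions $\tau_\eta\in T\cap 2^{\max X_{n-2}}$ with $C'(\tau_\eta)=c$. Setting $\rho_\eta=\rho(\tau_\eta)$, I would assemble
\[
  S \;=\; S' \;\cup\; \bigcup_\eta\{\sigma\in T:\eta\preceq\sigma\preceq\rho_\eta\} \;\cup\; \bigcup_\eta S_{\rho_\eta}.
\]
By construction $S\subseteq T$ is $(Y_m:m<n)$-quasistrong: the splitting within each $Y_m$ for $m<n-1$ is supplied by $S'$, the splitting within $Y_{n-1}$ by the $S_{\rho_\eta}$'s, and no splitting is required between $\max Y_{n-2}$ and $\min Y_{n-1}$. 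The leaves of $S$ --- the leaves of the $S_{\rho_\eta}$'s at level $\max Y_{n-1}$ --- then have extensions in $C^{-1}(c_{\rho_\eta})=C^{-1}(c)$, as required.

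The main technical point is ensuring that the color $c$ returned by the inductive hypothesis corresponds to \emph{live} $\tau_\eta$, so that $\rho(\tau_\eta)$ is actually defined. If the IH latches onto the default color assigned to `dead' $\tau$, the gluing step breaks. I would address this either by first applying persistence of $X_{n-2}$ with a live/dead $2$-coloring to replace $T$ by a subtree in which every node at $\max X_{n-2}$ is live before forming $C'$, or by arranging the default color to coincide with one realized by live nodes so that a live witness exists among the $\tau_\eta$'s.
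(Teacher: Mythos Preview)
Your argument is the paper's, reorganized as an induction on $n$. The paper unrolls this induction: it first defines colorings $C_{n-1}=C,\,C_{n-2},\ldots,C_0$ top-down, setting $C_{m-1}(\rho)=c_\rho$ to be the color returned by applying the persistence of $X_m$ (witnessed by $Y_m$) to the subtree $T_\rho$ of $T$ compatible with $\rho$; it then assembles $S_0\subseteq\cdots\subseteq S_{n-1}=S$ bottom-up by gluing the resulting $Y_m$-quasistrong pieces $U_{\rho(\sigma)}$ onto the leaves $\sigma$ of $S_m$. Your top-layer-first recursion is the same process with the bookkeeping folded into the inductive hypothesis.

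On the live/dead worry: the paper's proof has precisely this gap and simply ignores it, asserting that ``each $T_\rho$ with $\rho\in 2^{\max X_{m-1}}$ is $X_m$-quasistrong'' --- which is false whenever $\rho$ has no extension in $T\cap 2^{\min X_m}$. Neither of your proposed fixes works as stated: persistence does not let you choose the returned color, so a live/dead $2$-coloring may well return ``dead'', and matching the default color to some live value does not prevent the inductive hypothesis from handing you dead witnesses $\tau_\eta$. The honest resolution is that in the principal application (Lemma~\ref{lem:persistent-stack}) the tree $T$ is actually $\bigcup_m X_m$-quasistrong, so every node of $T\cap 2^{\max X_m}$ already extends in $T$ to level $\min X_{m+1}$ and the concern evaporates; under that harmless extra hypothesis your proof is complete as written, with no patch needed. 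In short, your argument is at least as rigorous as the paper's, and more candid about the one soft spot.
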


\begin{proof}
We may assume that $n > 0$. Let $X_{-1} = \{0\}$.
If $0 \leq m < n$ and $\rho \in 2^{\max X_{m-1}}$. Let
$$
  T_\rho (=T_{\rho}^m):= \{\sigma \in T: |\sigma| \leq \max X_m\wedge  \sigma \text{ is compatible with } \rho\}.
$$
Note that $T_\rho$ is $X_m$-quasistrong.

For $0\le m\le n-1$, define in succession colorings $C_m$. Let $C_{n-1} = C$.
Suppose that $C_m: 2^{\max X_m} \to k$ is defined and $m-1\ge 0$.
As $Y_m$ witnesses the persistence of $X_m$ and each $T_\rho$ with $\rho \in 2^{\max X_{m-1}}$ is $X_m$-quasistrong,
we can select $c_\rho < k$ and a tree $U_\rho \subseteq T_\rho$ such that $U_\rho$ is $Y_m$-quasistrong and
every leaf of $U_\rho$ has an extension in $C_m^{-1}(c_\rho)$.
Define $C_{m-1}: 2^{\max X_{m-1}} \to k$ by $C_{m-1}(\rho) = c_\rho$.  Define a sequence of trees $\{S_m: m<n\}$ as follows:

Let $c = c_{\emptyset}$, and let $S_0 = U_{\emptyset}$.
For $m < n$, suppose that
\begin{itemize}
 \item $S_m$ is a subtree of $T \cap 2^{\leq \max X_m}$;
 \item $S_m$ is $(Y_\ell: \ell \leq m)$-quasistrong;
 \item every leaf of $S_m$ has an extension in $C_m^{-1}(c)$.
\end{itemize}

Suppose that $m < n-1$. By the definition of $C_m$, for each leaf $\sigma$ of $S_m$,
we can select $\rho(\sigma) \in 2^{\max X_{m}}$ such that $\sigma \preceq \rho(\sigma)$ and $c_{\rho(\sigma)} = c$.
Let $S_{m+1}$ be the union of $S_m$ and the trees $U_{\rho(\sigma)}$'s for $\sigma$ ranging over the leaves of $S_m$. Then $S=S_{n-1}$ is the required tree.
\end{proof}

To obtain an $(\alpha \cdot n, k, i)$-persistent set by the process of stacking,
we may need more than $n$-many $(\alpha,k,i)$-persistent sets to succeed.
The additional  stacks are used to supply the missing splits mentioned after Definition \ref{def:stack-qstrong}.
Roughly speaking, we keep the even layers in the stack and discard  the odd layers to produce the splits,
which reduces the size by a factor of two. The next lemma provides a sufficient condition.

\begin{lemma} \label{lem:persistent-stack}
Suppose that $(X_m: m < 2^i n - 2^i + 1)$ is a stack of $(\alpha,k,i)$-persistent sets.
Then $ \bigcup_m X_m$ is $(\alpha \cdot n,k,i)$-persistent.
\end{lemma}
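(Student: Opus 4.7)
The plan is to proceed by induction on $i$. The base case $i=0$ is immediate from the definition of $\alpha\cdot n$-largeness: writing $\alpha=\onomega^d\cdot n_0$, each $X_m$ is a union of a stack of $n_0$ many $\onomega^d$-large sets, so $\bigcup_{m<n}X_m$ is a union of a stack of $nn_0$ many $\onomega^d$-large sets, hence $\alpha\cdot n$-large.

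For $i\ge 1$, assume the lemma for $i-1$. Let $Y_m\subseteq X_m$ witness the $(\alpha,k,i)$-persistence of $X_m$, so each $Y_m$ is $(\alpha,k,i-1)$-persistent. Since $2^i(n-1)+1$ is odd, the even-indexed subcollection $(Y_{2j})_{0\le j\le 2^{i-1}(n-1)}$ forms a stack of $2^{i-1}(n-1)+1=2^{i-1}n-2^{i-1}+1$ many $(\alpha,k,i-1)$-persistent sets. By the induction hypothesis the union $Z:=\bigcup_j Y_{2j}$ is $(\alpha\cdot n,k,i-1)$-persistent. I claim that $Z$ witnesses $\bigcup_m X_m$ being $(\alpha\cdot n,k,i)$-persistent; the odd-indexed layers $Y_{2j+1}$ will only be used as ``bridges'' to supply the splits needed for $Z$-quasistrongness.

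To verify the tree reduction property for $Z$, fix a $\bigcup_m X_m$-quasistrong tree $T$ and a $k$-coloring $C$ of its leaves. Such a $T$ is a fortiori $\vec{X}$-quasistrong, so Lemma~\ref{lem:persistent-stack-naive} yields a color $c<k$ and a $\vec{Y}$-quasistrong subtree $S'\subseteq T$, all of whose leaves have extensions in $C^{-1}(c)$. Inspecting the construction of $S'$ in the proof of that lemma, above each $\sigma\in S'\cap 2^{\max Y_{2j}}$ there sits a unique extension $\rho(\sigma)\in S'\cap 2^{\max X_{2j}}$, and above $\rho(\sigma)$ a $Y_{2j+1}$-quasistrong subtree $U_{\rho(\sigma)}\subseteq S'$; since $Y_{2j+1}$ is $\alpha$-large, $U_{\rho(\sigma)}$ contains at least two incompatible nodes at level $\max Y_{2j+1}$. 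Extending each such leaf one more stage via the $Y_{2j+2}$-quasistrong subtree of $S'$ attached above $\max X_{2j+1}$ produces at least two incompatible descendants of $\sigma$ in $S'\cap 2^{\min Y_{2j+2}}$.

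Given this, prune $S'$ to a $Z$-quasistrong tree $S\subseteq S'$ by working through the levels of $Z$ from bottom to top. Within each $Y_{2j}$ the splits inherited from $S'$ already give $Y_{2j}$-quasistrongness. Between $\max Y_{2j}$ and $\min Y_{2j+2}$, for every $\sigma\in S\cap 2^{\max Y_{2j}}$ choose two incompatible descendants $\xi_1,\xi_2\in S'\cap 2^{\min Y_{2j+2}}$ as above, include the paths from $\sigma$ to each $\xi_i$ in $S$, and then attach above each $\xi_i$ the $Y_{2j+2}$-quasistrong structure from $S'$. The resulting $S$ is $Z$-quasistrong, and its leaves sit at level $\max Z=\max Y_{2^i(n-1)}$, which is the leaf level of $S'$, so they still have extensions in $C^{-1}(c)$. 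The main obstacle is this pruning step: one has to look inside the construction of $S'$ from Lemma~\ref{lem:persistent-stack-naive} and exploit the $\alpha$-largeness of the odd layers $Y_{2j+1}$ together with the first level of the $Y_{2j+2}$-quasistrong caps to manufacture the two incompatible extensions required to bridge consecutive even layers of $Z$.
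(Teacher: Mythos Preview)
Your proof is correct and follows essentially the same approach as the paper: induction on $i$, taking $Z=\bigcup_j Y_{2j}$ as the witness via the induction hypothesis, applying Lemma~\ref{lem:persistent-stack-naive} to obtain a $\vec{Y}$-quasistrong $S'\subseteq T$, and then using the odd-indexed layers $Y_{2j+1}$ to supply the missing splits when pruning $S'$ down to a $Z$-quasistrong $S$. The only cosmetic difference is that the paper picks the two incompatible extensions of each $\sigma\in S'\cap 2^{\max Y_{2m}}$ directly at level $\max Y_{2m+1}$, citing only the $Y_{2m+1}$-quasistrongness of $S'$, whereas you open up the construction of $S'$ from Lemma~\ref{lem:persistent-stack-naive} to locate those extensions; both arguments rely (the paper tacitly) on the fact that $S'$ has no dead ends below its top level.
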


\begin{proof}
For $i = 0$, $(\alpha \cdot n,k,i)$-persistence and $\alpha \cdot n$-largeness are the same, and the conclusion holds trivially.
Hence assume that $i > 0$.

Let $\bar{n} = 2^i n - 2^i + 1$ and $X = \bigcup_{m < \bar{n}} X_m$.
For each $m$, let $Y_m$ be a subset of $X_m$ witnessing the persistence of $X_m$.
Thus, each $Y_m$ is $(\alpha,k,i-1)$-persistent.
By induction hypothesis, $Y = \bigcup_{2\ell < \bar{n}} Y_{2\ell}$ is an $(\alpha \cdot n, k, i-1)$-persistent subset of $X$.

Let $C: 2^{\max X} \to k$ and let $T$ be an $X$-quasistrong tree.  Then $T$ is $(X_m: m < \bar{n})$-quasistrong.
By Lemma \ref{lem:persistent-stack-naive}, there exist $c < k$ and a tree $S' \subseteq T$ such that
$S'$ is $(Y_m: m < \bar{n})$-quasistrong and every leaf of $S'$ has an extension in $C^{-1}(c)$.

Let $S_0 = S' \cap 2^{\leq \max Y_0}$, which is $Y_0$-quasistrong.
Suppose that $2m < \bar{n}$ and $S_m$ is a $\bigcup_{\ell \leq m} Y_{2\ell}$-quasistrong subtree of $S' \cap 2^{\leq \max Y_{2m}}$.

If $2m = \bar{n}-1$ then let $S = S_m$.

Suppose that $2m < \bar{n}-1$.  As $\bar{n}$ is odd, $2m + 1 < \bar{n}-1$ as well.
If $\sigma$ is a leaf of $S_m$, $|\sigma| = \max Y_{2m}$.
For each such $\sigma$, select two distinct extensions of $\sigma$ in $S' \cap 2^{\max Y_{2m+1}}$, and denote them by $\rho(\sigma,0)$ and $\rho(\sigma,1)$.
The $\rho(\sigma,j)$'s exist because of $S'$ being $Y_{2m+1}$-quasistrong.
Let
\[
  S_{m+1} = \{\tau \in S': |\tau| \leq \max Y_{2m+2}, \tau \text{ is compatible with some } \rho(\sigma,j)\}.
\]
Then $S_{\bar{n}-1}$ is the $Y$-quasistrong subtree that we wanted.
%\begin{multline*}
%  S_{m+1} = \{\tau \in S': |\tau| \leq \max Y_{2m+2}, \\
%    \tau \text{ is comparable with some } \rho(\sigma,j)\}.
%\end{multline*}
\end{proof}

The lemma below allows us to construct $(\onomega^{d+1},\cdot, \cdot)$-persistent sets by stacking a sufficient number of $(\onomega^{d},\cdot, \cdot)$-persistent sets.

\begin{lemma} \label{lem:persistent-induction}
If  $X$ is $(\onomega^d \cdot (\min X + 1), k, i)$-persistent, then it is $(\onomega^{d+1},k,i)$-persistent.
\end{lemma}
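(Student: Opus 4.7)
The plan is to proceed by induction on $i$. For the base case $i = 0$, I would observe that an $\omega^d \cdot (\min X + 1)$-large set is automatically $\omega^{d+1}$-large: if $(Z_0, \ldots, Z_{\min X})$ is the stack of $\omega^d$-large pieces exhibiting $X$, then $X \setminus Z_0$ is a superset of the stack $(Z_1, \ldots, Z_{\min X})$ and hence $\omega^d \cdot \min X$-large by Fact \ref{fac:large-inj}; since $\min(X \setminus Z_0) > \min X$, the set $\{\min X\} \cup (X \setminus Z_0)$ is $\omega^{d+1}$-large by definition, and $X$ inherits $\omega^{d+1}$-largeness as a superset.

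For the inductive step, I would take a witness $Y \subseteq X$ for $X$'s $(\omega^d \cdot (\min X + 1), k, i)$-persistence. The trouble is that possibly $\min Y > \min X$, which blocks a direct application of the induction hypothesis to $Y$ (whose statement involves $\min Y + 1$, not $\min X + 1$). To circumvent this, I would set $Y^{*} = \{\min X\} \cup (Y \setminus \{\min Y\})$ and appeal to Proposition \ref{fac:persistent}(iv), via the injection $f(\min Y) = \min X$, identity elsewhere (which satisfies $f(y) \leq y$), to conclude that $Y^{*}$ is also $(\omega^d \cdot (\min X + 1), k, i-1)$-persistent. Since $\min Y^{*} = \min X$, the induction hypothesis applied to $Y^{*}$ yields that $Y^{*}$ is $(\omega^{d+1}, k, i-1)$-persistent.

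It remains to verify that $Y^{*}$ inherits the tree property as a witness for $X$. Given an $X$-quasistrong tree $T$ with leaf coloring $C$, pick $\rho \in T \cap 2^{\min X}$ and form the single-rooted restriction $T^{\rho} = \{\tau \in T : \rho \preceq \tau\}$, which is still $X$-quasistrong. Apply $Y$'s tree property to $T^{\rho}$ to obtain a color $c$ and a $Y$-quasistrong $S_0 \subseteq T^{\rho}$ whose leaves all extend into $C^{-1}(c)$. Letting $y_1$ be the second-smallest element of $Y$ (available since $|Y| \geq \min X + 1$), pick any $\sigma \in S_0 \cap 2^{\min Y}$ together with the two incompatible extensions $\mu_0, \mu_1 \in S_0 \cap 2^{y_1}$ supplied by $S_0$'s $Y$-quasistrongness, and take $\hat S \subseteq T$ to be the tree consisting of $\rho$, the intermediate initial segments of $\mu_0$ and $\mu_1$, and the $S_0$-descendants of $\mu_0$ or $\mu_1$; a direct check shows $\hat S$ is $Y^{*}$-quasistrong with leaves extending into $C^{-1}(c)$. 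The hardest part is exactly this last transfer: the witness $Y$ only guarantees trees rooted at level $\min Y$, whereas $Y^{*}$-quasistrongness demands a genuine split between $\min X$ and $y_1$. The key trick is to fix a single root $\rho$ at level $\min X$ before invoking $Y$'s tree property, so that the already-present $Y$-split in $S_0$ between $\min Y$ and $y_1$ can be promoted to supply the missing $\min X$-to-$y_1$ split.
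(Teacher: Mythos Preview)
Your proof is correct and follows the same approach as the paper: induction on $i$, with the same replacement $Y^{*} = \{\min X\} \cup (Y \setminus \{\min Y\})$ (the paper calls it $Z$) via Proposition~\ref{fac:persistent}, and the same promotion of the $Y$-split between $\min Y$ and $y_1$ to a $Y^{*}$-split between $\min X$ and $y_1$. The only cosmetic difference is that you first restrict $T$ to a single root $\rho$ before invoking $Y$'s witnessing property, whereas the paper applies it to $T$ directly and then notes that any chosen node in $S \cap 2^{\min Y}$ already has a unique ancestor in $S \cap 2^{\min X}$ by initial-segment closure; your restriction step is harmless but unnecessary (and as written your $T^{\rho}$ is not closed under initial segments---take its downward closure, which changes nothing at the relevant levels).
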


\begin{proof}
If $X$ is $(\onomega^d \cdot (\min X + 1), k, 0)$-persistent, then $X$ is $\onomega^d \cdot (\min X + 1)$-large and thus $\onomega^{d+1}$-large.
So $X$ is $(\onomega^{d+1},k,0)$-persistent.

Suppose that $i > 0$ and $X$ is $(\onomega^d \cdot (\min X + 1), k, i)$-persistent.
Let $Y \subseteq X$ witness the persistence of $X$.  Then $Y$ is $(\onomega^d \cdot (\min X+1),k,i-1)$-persistent.
Let
$$
  Z = \{\min X\} \cup (Y - \{\min Y\}).
$$
By Proposition \ref{fac:persistent}, $Z$ is $(\onomega^d \cdot (\min X+1),k,i-1)$-persistent as well.
By the induction hypothesis and the fact that $\min Z = \min X$, $Z$ is $(\onomega^{d+1},k,i-1)$-persistent.

Let $T$ be an $X$-quasistrong tree and $C: T \cap 2^{\max X} \to k$.
As $Y$ witnesses the persistence of $X$, there exist a $c < k$ and an $S$ such that $S$ is a $Y$-quasistrong subtree of $T$ and every leaf of $S$ has an extension in $C^{-1}(c)$.
Let
$$
  y_1 = \min (Y - \{\min Y\}).
$$
By the $Y$-quasistrongness of $S$, we can select two distinct elements in $S \cap 2^{y_1}$, say $\sigma_0$ and $\sigma_1$, which extend a string  in $S \cap 2^{\min Y}$.
Then $\sigma_0$ and $\sigma_1$ extend a common string in $S \cap 2^{\min X}$ as well.
Let
$$
  U = \{\tau \in S: \tau \text{ is compatible with one of } \sigma_0, \sigma_1\}.
$$
Then $U$ is $Z$-quasistrong and has all its leaves extended by elements of $C^{-1}(c)$.
It follows that $Z$ witnesses $X$ being $(\onomega^{d+1},k,i)$-persistent.
\end{proof}

The following which shows a connection between $\alpha$-largeness and persistence is of independent interest.
It will not be used in our subsequent discussion.

\begin{corollary}\label{cor:persistent-largeness}
Suppose that $d > 0$.
If $X$ is $\onomega^{2d+1}$-large and $\min X > \max\{k,i,2\}$,
then $X$ is $(\onomega^d,k,i)$-persistent.
\end{corollary}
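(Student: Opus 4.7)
The plan is to induct on $d \geq 1$, invoking Lemma \ref{lem:persistent-existence} for the base case and chaining Lemma \ref{lem:persistent-stack} with Lemma \ref{lem:persistent-induction} for the inductive step.

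For the base case $d = 1$, take $X$ to be $\onomega^3$-large with $\min X > \max\{k, i, 2\}$. Fact \ref{fac:omega3-large} yields $|X| > \exp^{\min X}(\min X)$, which dwarfs the threshold $\bar{g}(\min X, k, i) \leq (\min X + k + i + 1)^{2^i}$ from Lemma \ref{lem:persistent-existence}, since $k, i < \min X$. Hence $X$ is $(\onomega, k, i)$-persistent.

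For the inductive step, assume the corollary holds at level $d$ and let $X$ be $\onomega^{2d+3}$-large with $m = \min X > \max\{k, i, 2\}$. By Lemma \ref{lem:persistent-induction}, it suffices to show $X$ is $(\onomega^d \cdot (m+1), k, i)$-persistent; by Lemma \ref{lem:persistent-stack} (applied with $n = m+1$) together with Proposition \ref{fac:persistent}(iii), this reduces to exhibiting, inside $X$, a stack of $2^i m + 1$ many $(\onomega^d, k, i)$-persistent sets. Unfolding the definitions, I would decompose $X \setminus \{\min X\}$ as a stack $Y_0 < Y_1 < \cdots < Y_{m-1}$ of $\onomega^{2d+2}$-large sets, and each $Y_j \setminus \{\min Y_j\}$ as a stack of $\min Y_j$ many $\onomega^{2d+1}$-large sets. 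Concatenating yields a stack of $\sum_{j<m} \min Y_j$ many $\onomega^{2d+1}$-large subsets of $X$; each has minimum $> m > \max\{k, i, 2\}$, so by the induction hypothesis each is $(\onomega^d, k, i)$-persistent.

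The one arithmetic check is that $\sum_{j<m} \min Y_j \geq 2^i m + 1$, and this is where the bulk of the $\omega$-largeness is spent. Since $d \geq 1$, $Y_0$ is $\onomega^{2d+2}$-large, hence $\onomega^3$-large, so Fact \ref{fac:omega3-large} gives $|Y_0| > \exp^{\min Y_0}(\min Y_0) \geq \exp^{m+1}(m+1)$; then $\min Y_1 \geq \max Y_0 + 1 > |Y_0|$, and this iterated exponential of $m+1$ easily exceeds $2^i m + 1 \leq 2^{m-1} m + 1$ for $i < m$. So the stack is long enough. I expect this counting to be the only real bookkeeping obstacle; conceptually everything else is dispatched by the preceding lemmas.
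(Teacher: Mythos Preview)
Your argument is correct and follows essentially the same inductive scheme as the paper: base case via Lemma~\ref{lem:persistent-existence} together with Fact~\ref{fac:omega3-large}, and inductive step via Lemma~\ref{lem:persistent-stack} followed by Lemma~\ref{lem:persistent-induction}. The only noteworthy difference is in how the minimum is aligned before invoking Lemma~\ref{lem:persistent-induction}: the paper splits $X$ into two $\onomega^{2d}$-large halves, unfolds the second half one level further, and then applies the shift map of Proposition~\ref{fac:persistent}(iv) so that the resulting stack-union has minimum exactly $\min X$; you instead unfold two levels directly, take the stack-union inside $X$, and pass to the superset $X$ via Proposition~\ref{fac:persistent}(iii) before applying Lemma~\ref{lem:persistent-induction}. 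Both manoeuvres achieve the same end, and your route arguably avoids the extra bookkeeping of the shift map.
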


\begin{proof}
Suppose that $d = 1$, $X$ is $\onomega^{3}$-large and $x_0 = \min X > \max\{k,i,2\}$.
Then
\[
  X = \{x_0\} \cup X_1 \cup X_2 \cup X_3,
\]
where $x_0 < X_1 < X_2 < X_3$ and the $X_j$'s are $\onomega^2$-large.
For $j = 1,2,3$, let $x_j = \min X_j$.
Straightforward calculations show that $x_2 > 2^{x_1}$ and
\[
  |X_2 \cup X_3| > 2^{(x_2+2) 2^{x_2}} > (4x_2)^{2^{x_2}} \geq \bar{g}(x_2,k,i),
\]
where $\bar{g}$ is as in Lemma \ref{lem:persistent-existence}.
By Lemma \ref{lem:persistent-existence}, $X_2 \cup X_3$ is $(\onomega,k,i)$-persistent.
Thus $X$ is $(\onomega,k,i)$-persistent as well, by Proposition \ref{fac:persistent}.

Now suppose that $d > 1$, $X$ is $\onomega^{2d + 1}$-large and $x_0 = \min X > \max\{k,i,2\}$.
Then  $X = X_0 \cup X_1$, where $X_0$ and $X_1$ are $\onomega^{2d}$-large sets such that $X_0 < X_1$.
Let $n = 2^i (\min X + 1) - 2^i + 1$.
As $\min X_0 = \min X \geq i$, $\max X_0 \geq n$ by an easy calculation and that $2d > 2$.
Then  $X_1$ is the union of the following sets
$$
  \{\min X_1\} < Y_0 < \ldots < Y_{n-1} < Y_{n},
$$
where each $Y_m$ is $\onomega^{2d-1}$-large.
Let $f$ be the map sending every $j+1$-th element of $X_1$ to the $j$-th element of $X$.
For $m < n$,
 each $f(Y_m)$ is $\onomega^{2d-1}$-large with $\min f(Y_m) > \max\{k,i,2\}$,
 and thus $(\onomega^{d-1},k,i)$-persistent;
 and $\min f(Y_0) = \min X_0 = \min X$.
By Lemma \ref{lem:persistent-stack}, $Z = \bigcup_{m < n} f(Y_m)$ is $(\onomega^{d-1} \cdot (\min Z + 1), k, i)$-persistent.
By Lemma \ref{lem:persistent-induction}, $Z$, and thus $X$, is $(\onomega^{d},k,i)$-persistent.
\end{proof}

To incorporate the property of persistence into the construction of prehomogeneous trees, we introduce another persistence notion similar to that of $\alpha$-largeness.
This notion will handle colorings of an entire tree, instead of simply its leaves as in Definition \ref{def:persistent}.

\begin{definition} \label{def:superpersistent}
A set $X$ is \emph{$(\alpha,k,i)$-superpersistent}, 
 if for any collection $\{T_\rho: \rho \in 2^{\min X}\}$ of $X$-quasistrong trees and any $C: 2^{<\mathbb{N}} \to k$,
 there exist $Y \subseteq X$ and $S_\rho\subseteq T_\rho$ for each $\rho$ such that $Y$ is $(\alpha,k,i)$-persistent, 
 and $S_\rho$ is $Y$-quasistrong as well as $(C,Y)$-prehomogeneous.
\end{definition}

Observe that, apart from the obvious difference that superpersistence of $X$ concerns families of trees $\{T_\rho: \rho\in 2^{\text{min }X}\}$  
 while persistence concerns only single trees $T$, 
 there is the additional point that for the former one considers colorings of each $T_\rho$
 rather than just its leaves, which is the case for th latter, i.e.~the leaves of $T$. 

A superpersistent set shares properties similar to those given in Proposition \ref{fac:persistent} for persistent sets.
We state them below without proof:

\begin{proposition} \label{fac:superpersistent} \

\begin{enumerate}
 \item  [(i)] If $k' \geq k$, $i' \geq j$ and $X$ is $(\alpha,k',i')$-superpersistent, then $X$ is $(\alpha,k,i)$-superpersistent.
 \item  [(ii)] Every $(\alpha,k,i)$-superpersistent set is $\alpha$-large.
 \item [(iii)] Every superset of an $(\alpha,k,i)$-superpersistent set is also $(\alpha,k,i)$-superpersistent.
 \item [(iv)] Suppose that $X$ is $(\alpha,k,i)$-superpersistent, $f: X \to Y$ is injective and $f(x) \leq x$ for all $x \in X$.
  Then $Y$ is also $(\alpha,k,i)$-superpersistent.
\end{enumerate}
\end{proposition}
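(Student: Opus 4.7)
The plan is to adapt the proof of Proposition~\ref{fac:persistent} to the superpersistence setting, tracking colorings across all levels of the trees rather than only at the leaves.

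Parts (i) and (ii) are immediate. For (i), the conditions defining $Y$-quasistrongness and $(C,Y)$-prehomogeneity are blind to the codomain size of $C$, so a witness $Y \subseteq X$ to $(\alpha,k',i')$-superpersistence also witnesses $(\alpha,k,i)$-superpersistence once we pass from $(\alpha,k',i')$- to $(\alpha,k,i)$-persistence via Proposition~\ref{fac:persistent}(i). For (ii), applying the definition to any fixed $X$-quasistrong collection and any coloring yields a persistent $Y \subseteq X$, which is $\alpha$-large by Proposition~\ref{fac:persistent}(ii); hence $X \supseteq Y$ is $\alpha$-large via Fact~\ref{fac:large-inj}.

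Part (iii) mirrors Proposition~\ref{fac:persistent}(iii). Given $Y \subseteq X$ with $Y$ superpersistent, an $X$-quasistrong family $\{T_\rho\}_{\rho \in 2^{\min X}}$, and a coloring $C$, one notes $\min X \leq \min Y$ and, for each $\sigma \in 2^{\min Y}$, produces a $Y$-quasistrong subtree $T'_\sigma$ of $T_{\sigma \upharpoonright \min X}$ by truncating at level $\max Y$ and pruning surplus branches at non-$Y$ levels. Applying the superpersistence of $Y$ to $\{T'_\sigma\}$ with the same $C$ yields a persistent $Z \subseteq Y$ together with $Z$-quasistrong, $(C,Z)$-prehomogeneous $S'_\sigma \subseteq T'_\sigma$; setting $S_\rho := S'_\sigma$ for any $\sigma$ extending $\rho$ gives the desired witnesses.

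For (iv), by (iii) we may assume $f$ is a bijection onto $Y$, and, after composing with the unique order-preserving bijection between $X$ and $Y$ (which still satisfies $f(x) \leq x$ by a short counting argument), that $f$ is order-preserving. Writing $X = \{x_0 < \cdots < x_{n-1}\}$ and $y_i = f(x_i)$, we construct, for each $\bar\rho \in 2^{\min X}$, an $X$-quasistrong stretched tree $T^X_{\bar\rho}$ together with a parent-child preserving bijection $g: T^X_{\bar\rho} \cap \bigcup_{x \in X} 2^x \to T_{\bar\rho \upharpoonright \min Y} \cap \bigcup_{y \in Y} 2^y$, analogous to the isomorphism built in the proof of Proposition~\ref{fac:persistent}(iv). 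We define a pullback coloring $C^X$ on $T^X_{\bar\rho}$ by choosing, for each node $\zeta$ between $X$-levels $x_i$ and $x_{i+1}$, a node $\pi(\zeta)$ on the corresponding $T$-path between the $y_i$- and $y_{i+1}$-endpoints (determined by $\zeta$'s branching bits), and setting $C^X(\zeta) := C(\pi(\zeta))$. Applying the superpersistence of $X$ to $\{T^X_{\bar\rho}\}$ and $C^X$ yields a persistent $W \subseteq X$ and $(C^X,W)$-prehomogeneous $S^X_{\bar\rho}$; the pushforwards $S_\rho := g(S^X_{\bar\rho})$, closed under initial segments inside $T_\rho$, are $f(W)$-quasistrong and $(C,f(W))$-prehomogeneous, with $f(W) \subseteq Y$ persistent by Proposition~\ref{fac:persistent}(iv). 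The main obstacle is arranging $\pi$ so that prehomogeneity transfers cleanly, since the slab lengths $x_{i+1} - x_i$ and $y_{i+1} - y_i$ need not be comparable in either direction; the remedy is to let $\pi(\zeta)$ depend only on $\zeta$'s branching pattern between $X$-levels (not on its exact offset within an $X$-slab), whereupon a $c$-colored witness in $S^X_{\bar\rho}$ between two $W$-levels yields a $c$-colored node on the corresponding path in $T_\rho$, lying in $S_\rho$ by closure under initial segments.
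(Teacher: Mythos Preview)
The paper actually gives no proof of this proposition; it simply states the four items and remarks that they are similar to Proposition~\ref{fac:persistent}. Your write-up therefore supplies exactly what the paper leaves to the reader, and it does so along the lines the authors evidently intended: each item is reduced to its persistent counterpart in Proposition~\ref{fac:persistent}, with the additional bookkeeping needed to handle a family $\{T_\rho\}$ indexed by $2^{\min X}$ and a coloring of entire trees rather than of leaves.

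Your arguments for (i)--(iii) are correct and essentially forced. For (iv) your strategy is the right one and parallels Proposition~\ref{fac:persistent}(iv): reduce to an order-preserving bijection, stretch each $T_\rho$ to an $X$-quasistrong tree $T^X_{\bar\rho}$ via a level-wise isomorphism $g$, pull back the coloring, apply superpersistence of $X$, and push forward. The one point worth making explicit is how a single coloring $C^X:2^{<\mathbb N}\to k$ is obtained from the tree-dependent maps $\pi$: make each $T^X_{\bar\rho}$ compatible with $\bar\rho$, so that above level $x_0$ the trees are pairwise disjoint and $C^X$ can be defined piecewise (arbitrarily off $\bigcup_{\bar\rho}T^X_{\bar\rho}$ and below level $x_0$). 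Your concrete choice $\pi(\zeta)=g(\zeta\upharpoonright x_j)$ for $x_j\le|\zeta|<x_{j+1}$ then works cleanly: if $\zeta$ witnesses $(C^X,W)$-prehomogeneity between $W$-levels $x_p<x_q$, one has $p\le j\le q$, so $g(\zeta\upharpoonright x_j)$ lies at a $Y$-level between $f(x_p)$ and $f(x_q)$, sits below $g(\tau^X)\in S_\rho$ (hence in $S_\rho$ by downward closure), and has $C$-color $c$. With that clarification your proof of (iv) goes through.
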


We begin proving the existence of $(\onomega,\cdot, \cdot)$-superpersistent sets with the following two lemmata.

\begin{lemma} \label{lem:intersection}
Let $Y\subset\mathbb{N}$, $\epsilon, \delta\in (0,1)$ be rational, $S$ be a finite set and let $\{R_y: y\in Y\}$ be a family of subsets  of $S$ with $|R_y| \geq \delta |S|$.
For $p \in [S]^2$, let $Y_p = \{y \in Y: p \in [R_y]^2\}$. Let $P = \{p \in [S]^2: |Y_p| \geq \epsilon |Y|\}$. Then
\[
  |P| > \frac{(\delta |S| - 1)^2 - \epsilon |S|^2}{2(1-\epsilon)}.
\]
In particular, if $(\delta - \sqrt{\epsilon}) |S| > 1$ then there is a $p$ such that $|Y_p|>\epsilon|Y|$.
\end{lemma}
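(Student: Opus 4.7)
The plan is a standard double-counting of incidences between elements of $Y$ and unordered pairs in $[S]^2$. Set
\[
N := \sum_{p \in [S]^2} |Y_p| = \sum_{y \in Y} \binom{|R_y|}{2},
\]
the equality holding because $(y,p)$ contributes to both sums exactly when $p \in [R_y]^2$, equivalently $y \in Y_p$. Assuming $\delta|S|\geq 1$ (otherwise the main bound is vacuous and the ``in particular'' clause is inapplicable), the hypothesis $|R_y|\geq\delta|S|$ together with the elementary inequality $\binom{m}{2} \geq (m-1)^2/2$ for $m \geq 1$ yields $\binom{|R_y|}{2} \geq (\delta|S|-1)^2/2$; summing over $y \in Y$ gives $N \geq |Y|(\delta|S|-1)^2/2$.

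For the upper bound on $N$, I would split $[S]^2 = P \sqcup ([S]^2 \setminus P)$ and use $|Y_p| \leq |Y|$ on $P$ together with $|Y_p| < \epsilon|Y|$ on $[S]^2 \setminus P$ (from the definition of $P$) to obtain
\[
N < (1-\epsilon)\,|P|\cdot|Y| + \epsilon\,|Y|\cdot|[S]^2|.
\]
Combining this with the lower bound, dividing by $|Y|$, isolating $|P|$, and applying $|[S]^2| = |S|(|S|-1)/2 < |S|^2/2$ to the negative term produces the stated inequality
$|P| > ((\delta|S|-1)^2 - \epsilon|S|^2)/(2(1-\epsilon))$. The degenerate case $[S]^2\setminus P = \emptyset$, in which the strict $<$ above is not directly available, is easy to dispatch since then $|P| = \binom{|S|}{2}$ dominates the right-hand side outright.

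For the ``in particular'' conclusion, rewriting $(\delta - \sqrt{\epsilon})|S| > 1$ as $\delta|S| - 1 > \sqrt{\epsilon}|S| \geq 0$ and squaring yields $(\delta|S|-1)^2 > \epsilon|S|^2$, so the right-hand side of the main estimate is strictly positive and $P \neq \emptyset$; any $p \in P$ then satisfies $|Y_p| \geq \epsilon|Y|$, and the stated strict form $|Y_p| > \epsilon|Y|$ follows from a small upward perturbation of $\epsilon$ (exploiting density of rationals) or from integrality of $|Y_p|$. No serious obstacle is anticipated here: the argument is entirely routine, and the only care needed is in tracking strict versus non-strict inequalities so that the constant and the strictness in the displayed bound come out correctly.
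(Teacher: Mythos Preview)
Your argument is correct and is essentially the same double-counting as in the paper: both count the incidences $\{(p,y): p\in[R_y]^2\}$, bound below using $|R_y|\ge\delta|S|$, bound above by splitting over $P$ and its complement and replacing $|[S]^2|$ by $|S|^2/2$, then solve for $|P|$. Your extra care about the degenerate case $[S]^2\setminus P=\emptyset$ and about the strict versus non-strict inequality in the ``in particular'' clause goes slightly beyond what the paper spells out, but the substance is identical.
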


\begin{proof}
We do a counting argument.  Consider the set
\[
  Q = \{(p,y): p \in [R_y]^2\}.
\]

On the one hand, as $|R_y| \geq \delta |S|$, there are at least $2^{-1}\delta |S| (\delta |S| - 1)$ many elements in each $[R_y]^2$, thus
\[
  |Q| \geq 2^{-1} \delta |S| (\delta |S| - 1) |Y| > 2^{-1} (\delta |S| - 1)^2 |Y|.
\]
On the other hand, $Q=\{(p,y): p\in [R_y]^2\}=\{(p,y): y\in Y_p\}$, thus
\begin{align*}
  |Q| = \sum_{p \in P} |Y_p| + \sum_{p \not\in P} |Y_p|
    < |P| |Y| + (2^{-1}|S|^2 - |P|) \epsilon |Y|.
\end{align*}
Hence
\[
  |P| > \frac{(\delta |S| - 1)^2 - \epsilon |S|^2}{2(1-\epsilon)}.
\]
In particular, when $(\delta - \sqrt{\epsilon}) |S| > 1$, $P$ is nonempty, the result follows.
\end{proof}
%
%\begin{lemma}\label{lem:pos-antichains}
%Let $X\subset\mathbb{N}$, $0<\delta\le 1$, and let $h: \mathbb{N}^2 \times \mathbb{Q} \to \mathbb{N}$ be such that for all $m, \ell>0$,
%\begin{equation}\label{eq:pos-antichains}
%  h(\ell,m,\delta) \geq h(2\ell, m-1, \delta/4) 2^{3 \ell (e+2)} + e + 3,
%\end{equation}
%where $e$ is the least such that $2^{-e} < \delta$.
%Assume that:
%\begin{enumerate}
% \item [(a)] there is a $Y \subseteq X$ such that $|Y| \geq h(\ell,m,\delta)$;
% \item [(b)] there is a family $\{T_k: k < \ell\}$ of $X$-quasistrong trees such that each $T_k \cap 2^{\min X}$ has a single element $\rho_k$;
% \item [(c)] there is a family $\{A_{k,y}: k < \ell, y \in Y\}$ such that $A_{k,y} \subseteq T_k \cap 2^y$ and $|A_{k,y}| \geq \delta |T_k \cap 2^y|$.
%\end{enumerate}
%Then there is a $Z \in [X]^{m}$ and a family of trees $\{S_k: k < \ell\}$, such that
%\begin{enumerate}
% \item [(i)] $\min Z = \min X$;
% \item [(ii)] $S_k$ is a $Z$-quasistrong subtree of $T_k$, and
% \item [(iii)] if $(x,z) \in [Z]^2$ and $\sigma \in S_k \cap 2^z$; then $\sigma \upharpoonright y \in A_{k,y}$ for some $y \in Y \cap [x,z]$.
%\end{enumerate}
%\end{lemma}

Fix a subset $X$ of $\mathbb{N}$.
\begin{lemma}\label{lem:pos-antichains}
There is a primitive recursive function $h: \mathbb{N}^2 \times \mathbb{Q} \to \mathbb{N}$ such that for any $Y$, $\{T_k: k < \ell\}$ and
$\{A_{k,y}: k < \ell, y \in Y\}$ satisfying
\begin{enumerate}
 \item [(a)] $Y$ is a (finite) subset of $X$ and $|Y| \geq h(\ell,m,\delta)$;
 \item [(b)] $\{T_k: k < \ell\}$ is a family of $X$-quasistrong trees and $T_k \cap 2^{\min X}$ has a single element $\rho_k$;
 \item [(c)] $A_{k,y}\subseteq T_k \cap 2^y$ and $|A_{k,y}| \geq \delta |T_k \cap 2^y|$,
\end{enumerate}
there exist  a $Z \in [X]^{m}$ and a family of trees $\{S_k: k < \ell\}$ such that
\begin{enumerate}
 \item [(i)] $\min Z = \min X$;
 \item [(ii)] $S_k$ is a $Z$-quasistrong subtree of $T_k$, and
 \item [(iii)] if $(x,z) \in [Z]^2$ and $\sigma \in S_k \cap 2^z$, then $\sigma \upharpoonright y \in A_{k,y}$ for some $y \in Y \cap [x,z]$.
\end{enumerate}
\end{lemma}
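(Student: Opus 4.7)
The plan is to construct $Z = \{z_0 < z_1 < \cdots < z_{m-1}\}$ and the trees $S_k$ iteratively, with $z_0 = \min X$ and $S_k^{(0)} = \{\tau : \tau \preceq \rho_k\}$.  Throughout the construction I would maintain the invariant that at stage $j$, for each $k < \ell$ and each leaf $\sigma$ of $S_k^{(j)}$ at level $z_j$, there is a witness set $W_{k,\sigma} \subseteq Y \cap (z_j, \max X]$ of size at least $\eta_j |Y|$ and a density threshold $\delta_j = \delta/2^j$ such that $|A_{k,y} \cap T_\sigma \cap 2^y| \geq \delta_j |T_\sigma \cap 2^y|$ for every $y \in W_{k,\sigma}$; initially $\eta_0 = 1$ and $W_{k,\rho_k} = Y$.

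To pass from stage $j$ to stage $j+1$, I would choose $z_{j+1} \in X$ so that (a) $(z_j, z_{j+1}]$ captures enough of the tail of $Y$ to force $W_{k,\sigma} \cap (z_j, z_{j+1}] \neq \emptyset$ for every current leaf (by pigeonhole), and (b) enough of $Y$ remains above $z_{j+1}$ to keep the invariant alive in subsequent stages.  For each current leaf $\sigma$, pick any $y^\ast \in W_{k,\sigma} \cap (z_j, z_{j+1}]$; the density bound at $y^\ast$ yields at least two distinct elements $\eta_0, \eta_1 \in A_{k,y^\ast} \cap T_\sigma \cap 2^{y^\ast}$, whose extensions to level $z_{j+1}$ would be the two required incompatible children of $\sigma$ and would automatically satisfy the current-stage property.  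To choose these extensions so the invariant survives, I would apply Lemma \ref{lem:intersection} with $S = T_\sigma \cap 2^{z_{j+1}}$ and the family $\{R_y\}_{y \in W_{k,\sigma} \cap (z_{j+1}, \max X]}$, where $R_y$ consists of those $\tau \in S$ above which $A_{k,y}$ still has density at least $\delta_{j+1}$; a routine averaging argument gives $|R_y| \geq (\delta_j/(2-\delta_j))|S|$, so the lemma supplies an incompatible pair $(\tau_0, \tau_1)$ for which a fraction $\epsilon_j$ (of order $\delta_j^2$, as forced by the ``$(\delta - \sqrt{\epsilon})|S| > 1$'' clause) of the tail-levels in $W_{k,\sigma}$ simultaneously witnesses both endpoints.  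Restricting the pair-selection to pairs that respectively extend the pre-chosen $\eta_0$ and $\eta_1$ combines the current-stage and invariant-preserving choices at a mild additional density loss.

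The function $h(\ell, m, \delta)$ is then defined recursively on $m$ to track (i) the geometric decay $\delta_{j+1} = \delta_j/2$, (ii) the witness-set shrinkage $\eta_{j+1} \approx \epsilon_j \eta_j$, and (iii) the $\ell \cdot 2^{m-1}$ total leaf-by-leaf applications of Lemma \ref{lem:intersection}; since each $\epsilon_j$ is polynomial in $\delta_j$ and $\delta_j$ decays geometrically, the composition is primitive recursive in $(\ell, m, 1/\delta)$.  The main obstacle is the pair of competing requirements on $z_{j+1}$ in (a)--(b): the interval $(z_j, z_{j+1}]$ must be large enough to absorb each current leaf's witness set, yet small enough to leave a substantial tail for future stages, and these must hold simultaneously for all $\ell \cdot 2^j$ current leaves.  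A secondary delicate point is combining, inside a single choice of pair, the current-stage witness (coming from $y^\ast \in (z_j, z_{j+1}]$) with the density-preserving pair-selection (coming from Lemma \ref{lem:intersection} applied to $y > z_{j+1}$); this is handled either by applying the lemma to an enlarged family covering both $y$-ranges or by accepting a small density penalty when restricting selection to pairs respecting the pre-chosen $\eta_0, \eta_1$.
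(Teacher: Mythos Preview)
Your overall architecture (induction on $m$, with Lemma~\ref{lem:intersection} supplying the branching pair and a primitive recursive $h$ tracking the losses) matches the paper's. But the point you yourself flag as ``the main obstacle''---the competing requirements (a) and (b) on $z_{j+1}$---is a real gap that your sketch does not close. Once the witness sets $W_{k,\sigma}$ are allowed to differ from leaf to leaf, there need not be any single $z_{j+1}$ that meets every $W_{k,\sigma}$ in $(z_j,z_{j+1}]$ while leaving each of them a large tail above $z_{j+1}$: think of two leaves whose $W$-sets occupy disjoint halves of the remaining $Y$-levels. Your proposed fix (restricting the pair-selection to extensions of pre-chosen $\eta_0,\eta_1$) does not help here, since the choice of $z_{j+1}$ is global and must precede any leaf-specific choices. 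The ``secondary delicate point'' you raise is likewise not resolved: Lemma~\ref{lem:intersection} yields a pair in a single ambient set $S$, not one member from each of two prescribed sub-cones, so the claimed combination needs a different argument.

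The paper avoids this coordination problem with a device absent from your sketch. For each $k$ and each $y$ in a tail $Y_0$ of $Y$, set
\[
  B_{k,y}=\bigl\{\sigma\in T_k\cap 2^y:\ \exists\,x\in Y_0\cap[0,y]\ (\sigma\upharpoonright x\in A_{k,x})\bigr\},
\]
the set of level-$y$ nodes that have \emph{already} passed through some $A$-set. Its density is nondecreasing in $y$, so a pigeonhole on its $2^{-(e+2)}$-discretised profile, carried out simultaneously over all $k<\ell$, extracts a large $Y_1\subseteq Y_0$ on which these densities are essentially frozen. With $y_1=\min Y_1$ and $B_k=B_{k,y_1}$, the freezing forces that for every later $y\in Y_1$ almost all of $A_{k,y}$ (density $\geq 3\delta/4$) lies above $B_k$. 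The key consequence is that \emph{any} node chosen in $B_k$ already satisfies condition~(iii) for the first $Z$-gap, so there is no separate ``current-stage'' requirement to reconcile with the invariant. One then iterates Lemma~\ref{lem:intersection} once per $k<\ell$, each time shrinking a \emph{common} index set inside $Y_1$, to select pairs $\zeta(k,0),\zeta(k,1)\in B_k$ that are simultaneously good for a single large $Y'\subseteq Y_1$, and recurses on $X\cap[y_1,\infty)$ with $2\ell$ trees, density $\delta/4$, and $Y'$ in place of $Y$. This stabilisation of $B_{k,y}$, together with the explicit synchronisation of the index set across all $k$, is what your proposal is missing.
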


In fact the most important property of $h$ that we will  need is the following: for all $m, \ell>0$, $0<\delta\le 1$ rational,
\begin{equation}\label{eq:pos-antichains}
  h(\ell,m,\delta) \geq h(2\ell, m-1, \delta/4) 2^{3 \ell (e+2)} + e + 3,
\end{equation}
where $e$ is the least such that $2^{-e} < \delta$.

\vskip.2in

\noindent {\it Remark.} The informal idea is that $h(\ell,m,\delta)$  offers  enough space to accommodate a desired $Z$ of size  $m$.   
If $m-1$-many members of $Z$ are determined, 
 then in the course of obtaining the $m^{\rm th}$-member of $Z$, 
    the   number $\ell$ of  leaves of $S_k$ is doubled to $2\ell$ and the density $\delta$ may be reduced by some factor, say $1/4$.
In the above inequality, the  numbers   $2^{3 \ell (e+2)}$ and $e+3$ come from combinatorial calculations that  show up in the proof below. 

There exist  primitive recursive functions satisfying \eqref{eq:pos-antichains}. For instance, we may take
\begin{equation}\label{eq:h}
  h(\ell,m,\delta) = \exp\{e \ell \exp(5m)\},
\end{equation}
where $e$ is the least integer such that $2^{-e} < \delta$.
If $\ell$ and $m$ are positive, then
\begin{align*}
  h(2\ell, m-1, \delta/4) 2^{3 \ell (e+2)} &= \exp\{(e+2) \ell \exp(5m-4) + 3 (e+2) \ell\} \\
    &\leq \exp\{e \ell (\exp(5m-2) + 9)\} \\
    &\leq h(\ell,m,\delta) - (e + 3).
\end{align*}
Hence  $h$ satisfies \eqref{eq:pos-antichains}.

\begin{proof}
Suppose that $Y$, $\{T_k\}$ and $\{A_{k,y}\}$ satisfy (a)--(c).  Then $|Y| \geq h(\ell,m,\delta) > e+3$.
Let $Y_0$ be obtained by removing the first $e+3$ many elements from $Y$, and let $y_0 = \min Y_0$.
Clearly, by \eqref{eq:pos-antichains}
\begin{equation}\label{eq:pos-antichains-Y0}
  |Y_0| \geq h(\ell, m, \delta)-(e+3)\geq h(2\ell,m-1,\delta/4) 2^{3\ell(e+2)}.
\end{equation}
Note that by the quasistrongness of $T_k$,
\begin{equation}\label{eq:pos-antichains-w}
  y \in Y_0 \to |T_k \cap 2^{y}| \geq 2^{e+3}.
\end{equation}
For each $y \in Y_0$, let
$$
  B_{k,y} = \{\sigma \in T_k \cap 2^y: \exists x \in Y_0 \cap [0,y] (\sigma \upharpoonright x \in A_{k,x})\}.
$$
We chop the (density) interval $[0,1]$ into $2^{e+2}$ many subintervals and consider the interval $[\frac{v}{2^{e+2}}, \frac{v+1}{2^{e+2}})$ 
 that $\frac{|B_{k,y}|}{|T_k \cap 2^y|}$ falls into and define:
$$
  f_y: \ell \to 2^{e+2}, \ f_y(k) = \max\left\{v < 2^{e+2}: |B_{k,y}| \geq \frac{v}{2^{e+2}} |T_k \cap 2^y| \right\}.
$$
Since there are $2^{\ell(e+2)}$ many such $f_y$'s, by \eqref{eq:pos-antichains-Y0} and the  Pigeonhole Principle
we can select $Y_1 \subseteq Y_0$  such that
\begin{equation}\label{eq:pos-antichains-Y1-size}
  |Y_1| \geq h(2\ell, m-1, \delta/4) 2^{2\ell(e+2)},
\end{equation}
and for all $y,y' \in Y_1$, $f_{y} = f_{y'}$.
Let $y_1 = \min Y_1$ and $B_k = B_{k,y_1}$.
Note that for each $y \in Y_1$ and $k < \ell$,
\begin{equation}\label{eq:pos-antichains-Y1}
  |\{\sigma \in A_{k,y}: \sigma \upharpoonright y_1 \in B_{k}\}| > (\delta - 2^{-e-2}) |T_k \cap 2^y|
    > \frac{3\delta}{4} |T_k \cap 2^y|.
\end{equation}
The first inequality uses the fact that
\[
|\{\sigma\in A_{k,y}: \sigma \upharpoonright y_1 \not\in B_{k}\}|<\frac{1}{2^{e+2}}|T_k \cap 2^y|,
\]
since otherwise, the number $\frac{|B_{k,y}|}{|T_k \cap 2^y|}$ would fall into some interval $[\frac{v'}{2^{e+2}}, \frac{v'+1}{2^{e+2}})$
with $v'>v$, contradicting  the definition of $Y_1$.
For each $\zeta \in T_k \cap 2^{y_1}$, let
$$
  T_{k,\zeta} = \{\sigma \in T_k: \sigma \text{ is compatible with } \zeta\},
$$
and let
$$
  A_{k,\zeta,y} = A_{k,y} \cap T_{k,\zeta}.
$$
For $k < \ell$ and $y \in Y_1$, let
\[
  D_{k,y} = \left\{ \zeta \in B_k: |A_{k,\zeta,y}| \geq \frac{\delta}{4} |T_{k,\zeta} \cap 2^y| \right\}.
\]
Then for $y \in Y_1$,
\begin{align*}
  |\{\sigma \in A_{k,y}: \sigma \upharpoonright y_1 \in B_{k}\}| &=
    \big| \bigcup_{\zeta \in D_{k,y}} A_{k,\zeta,y} \big| + \big| \bigcup_{\zeta \in B_k - D_{k,y}} A_{k,\zeta,y} \big| \\
    &< \left( \frac{|D_{k,y}|}{|T_k \cap 2^{y_1}|} + \frac{\delta}{4} \right) |T_k \cap 2^y|.
\end{align*}
Putting this together with \eqref{eq:pos-antichains-Y1},
we obtain
\begin{equation}\label{eq:pos-antichains-Dky-size}
  |D_{k,y}| \geq 2^{-1}\delta |T_k \cap 2^{y_1}|.
\end{equation}

By \eqref{eq:pos-antichains-w} and \eqref{eq:pos-antichains-Dky-size}, 
 we can apply Lemma \ref{lem:intersection} to $Y_1$, $\epsilon=\frac{\delta^2}{4^2}$, $\frac{\delta}{2}$,
 $S = T_0 \cap 2^{y_1}$ and the family $(D_{0,y}: y \in Y_1)$.  
Since $(\delta-\sqrt{\epsilon})|S|>1$ because $|S| \geq 2^{y_1} > 2^{e+2}$,
 we obtain $p=(\zeta(0,0),\zeta(0,1))$ and $Z_0$ which is the $Y_p$ in Lemma \ref{lem:intersection} 
  such that $\zeta(0,0)$ and $\zeta(0,1)$ are distinct elements of $T_0 \cap 2^{y_1}$,
  $\zeta(0,i) \in D_{0,y}$ for $y \in Z_0 \subseteq Y_1$ and $|Z_0| \geq 2^{-4} \delta^2 |Y_1|$.
By $(\ell-1)$-many inductive applications of Lemma \ref{lem:intersection}, 
 from $T_1$ to $T_{\ell-1}$ consecutively,
 we have $Z_\ell \subseteq Z_0 \subseteq Y_1$ and $(\zeta(k,i): k < \ell, i < 2)$ 
  such that $\zeta(k,0)$ and $\zeta(k,1)$ are distinct elements of $T_k \cap 2^{y_1}$,
  $\zeta(k,i) \in D_{k,y}$ for $y \in Z_\ell$ and $|Z_\ell| \geq 2^{-4\ell} \delta^{2\ell} |Y_1|$.

Let $X' = X \cap [y_1,\infty)$, $Y' = Z_\ell$.
By \eqref{eq:pos-antichains-Y1-size} and $2^{-e} < \delta$,
\[
  |Y'| \geq h(2\ell,m-1,\delta/4).
\]
Applying the induction hypothesis to $X',Y'$, the $T_{k,\zeta(k,i)}$'s and $A_{k,\zeta(k,i),y}$'s,
we conclude that there exist $Z' \in [X']^{m-1}$ and $S_{k,\zeta(k,i)}$ ($i= 0,1$) such that
\begin{itemize}
\item  $\min Z' = \min X' = y_1$;
\item  $S_{k,\zeta(k,i)}$ is a $Z'$-quasistrong subtree of $T_{k,\zeta(k,i)}$;
\item  If $(x,z) \in [Z']^2$ and $\sigma \in S_{k,\zeta(k,i)}$ then $\sigma \upharpoonright y \in A_{k,\zeta(k,i),y}$ for some $y \in Y' \cap [x,z]$.
\end{itemize}

Finally, let $Z = \{\min X\} \cup Z' \in [X]^{m}$,
and for each $k < \ell$, let
$$
  S_k = \{\sigma: \exists i < 2 (\sigma \prec \zeta(k,i))\} \cup S_{k,\zeta(k,0)} \cup S_{k,\zeta(k,1)}.
$$
It is straightforward to verify that $Z$ and the $S_k$'s satisfy conditions (ii) and (iii).
\end{proof}

%
%\begin{lemma}\label{lem:prehom-multiple}
%Let $\bar{h}: \mathbb{N}^3 \to \mathbb{N}$ be such that
%$$
%  \bar{h}(x,n,k) = h(2^x,n,1/k) k^{2^x}.
%$$
%Suppose that
%\begin{enumerate}
% \item[(a)] $X$ is a finite set with $|X| \geq \bar{h}(\min X, n, k)$,
% \item[(b)] $(T_\rho: \rho \in 2^{\min X})$ is a collection of trees where
%  each $T_\rho$ is $X$-quasistrong and compatible with $\rho$,
% \item[(c)] $C: \bigcup_\rho T_\rho \to k$ is a coloring.
%\end{enumerate}
%Then there exist $Z \in [X]^n$ and $S_\rho \subseteq T_\rho$ for each $\rho \in 2^{\min X}$, such that
%$\min Z = \min X$, and
%each $S_\rho$ is $Z$-quasistrong and $(C,Z)$-prehomogeneous.
%\end{lemma}

\begin{lemma} \label{lem:prehom-multiple}
There is a primitive recursion function $\bar{h}: \mathbb{N}^3 \to \mathbb{N}$ such that
for all $X$,  $(T_\rho: \rho \in 2^{\min X})$ and $C$ such that 
\begin{enumerate}
 \item[(a)] $X$ is a finite set with $|X| \geq \bar{h}(\min X, n, k)$,
 \item[(b)] $(T_\rho: \rho \in 2^{\min X})$ is a collection of trees where each $T_\rho$ is $X$-quasistrong and compatible with $\rho$,
 \item[(c)] $C: \bigcup_\rho T_\rho \to k$ is a coloring,
\end{enumerate}
 there exist a $Z \in [X]^n$ and a family $(S_\rho \subseteq T_\rho:\rho \in 2^{\min X})$ satisfying
 \begin{enumerate}
 \item [(i)] $\min Z = \min X$;
\item [(ii)] Each $S_\rho$ is $Z$-quasistrong and $(C,Z)$-prehomogeneous.
\end{enumerate}
\end{lemma}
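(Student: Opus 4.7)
My approach is to reduce Lemma \ref{lem:prehom-multiple} to Lemma \ref{lem:pos-antichains} by a uniform pigeonhole on ``majority colors''. The key observation is this: if $S_\rho \subseteq T_\rho$ is $Z$-quasistrong and condition (iii) of Lemma \ref{lem:pos-antichains} holds with $A_{\rho,y} = C^{-1}(c_\rho) \cap T_\rho \cap 2^y$ for some fixed color $c_\rho$, then for consecutive $z_i < z_{i+1}$ in $Z$ and any $\tau \in S_\rho \cap 2^{z_{i+1}}$, there is a prefix $\zeta = \tau \upharpoonright y$ with $y \in Y \cap [z_i, z_{i+1}]$ such that $C(\zeta) = c_\rho$. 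Because $S_\rho$ is closed under initial segments and $y \leq z_{i+1}$, the prefix $\zeta$ lies in $S_\rho$, and $\tau \upharpoonright z_i \preceq \zeta \preceq \tau$. This is exactly $(C,Z)$-prehomogeneity of $S_\rho$ for the color $c_\rho$.

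To install the hypotheses of Lemma \ref{lem:pos-antichains}, I would first produce a uniform majority-color pattern. For each $y \in X$ and each $\rho \in 2^{\min X}$, pick $c(\rho, y) < k$ with $|C^{-1}(c(\rho,y)) \cap T_\rho \cap 2^y| \geq |T_\rho \cap 2^y|/k$. The assignment $y \mapsto (c(\rho, y))_{\rho \in 2^{\min X}}$ takes values in a set of size $k^{2^{\min X}}$, so by pigeonhole one extracts $Y \subseteq X \setminus \{\min X\}$ with $|Y| \geq (|X|-1)/k^{2^{\min X}}$ on which this vector-valued function is constant. Denoting its common $\rho$-coordinate by $c_\rho$ and setting $A_{\rho,y} := C^{-1}(c_\rho) \cap T_\rho \cap 2^y$, we have $|A_{\rho,y}| \geq (1/k)\,|T_\rho \cap 2^y|$ for every $y \in Y$.

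Now invoke Lemma \ref{lem:pos-antichains} with ambient set $X$, subset $Y$, the family $(T_\rho)_{\rho \in 2^{\min X}}$ (which has $\ell = 2^{\min X}$ trees, each with the single root $\rho$ at level $\min X$ by the compatibility hypothesis), parameters $m = n$ and $\delta = 1/k$, and the sets $A_{\rho,y}$. Provided $|Y| \geq h(2^{\min X}, n, 1/k)$, which holds once one takes
\[
  \bar{h}(\min X, n, k) := k^{2^{\min X}} \cdot h(2^{\min X}, n, 1/k) + 1,
\]
the lemma produces $Z \in [X]^n$ with $\min Z = \min X$, together with $Z$-quasistrong trees $S_\rho \subseteq T_\rho$ satisfying its condition (iii). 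By the key observation above, each $S_\rho$ is $(C,Z)$-prehomogeneous for $c_\rho$, completing the proof.

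I do not expect a substantive obstacle. The only delicate point is the $k^{2^{\min X}}$ blow-up introduced by pigeonholing over vectors of colors, which forces $\bar{h}$ to contain a double exponential in $\min X$; since $h$ itself is primitive recursive, $\bar{h}$ remains primitive recursive as required. Everything else — the density bookkeeping, the verification that $\zeta = \tau \upharpoonright y$ lies in $S_\rho$, and matching the indexing in Definition \ref{def:prehomogeneous} — is routine.
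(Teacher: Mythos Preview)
Your proposal is correct and follows essentially the same route as the paper: the paper defines $\bar{h}(x,n,k)=h(2^{x},n,1/k)\,k^{2^{x}}$, pigeonholes the majority-color vector $f_x\colon 2^{\min X}\to k$ to obtain $Y$ with $|Y|\ge h(2^{\min X},n,1/k)$, sets $A_{\rho,y}=T_\rho\cap 2^{y}\cap C^{-1}(f(\rho))$, and then applies Lemma~\ref{lem:pos-antichains} exactly as you describe. The only cosmetic differences are that you exclude $\min X$ before pigeonholing (hence the extra $+1$ in your $\bar{h}$) and you spell out why $\zeta=\tau\upharpoonright y$ lies in $S_\rho$, which the paper leaves implicit.
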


\begin{proof} We verify that
$$
  \bar{h}(x,n,k) = h(2^x,n,1/k) k^{2^x}
$$
works.
For each $x \in X$, define $f_x: 2^{\min X} \to k$ as follows:
$$
  f_x(\rho) = \min \left\{c < k: |T_\rho \cap 2^x \cap C^{-1}(c)| \geq \frac{|T_\rho \cap 2^x|}{k} \right\}.
$$
By (a) and the Pigeonhole Principle, we can select $Y \subseteq X$ such that $|Y| \geq h(2^{\min X},n,1/k)$ and $f_x = f_y$ for all $(x,y) \in [Y]^2$.
Let $f = f_y$ for any $y \in Y$.

For each $\rho \in 2^{\min X}$ and $y \in Y$, let
$$
  A_{\rho,y} = T_\rho \cap 2^y \cap C^{-1}(f(\rho)).
$$
It is easy to verify that $X, Y$, and the $T_\rho$'s as well as $A_{\rho,y}$'s satisfy the hypothesis of Lemma \ref{lem:pos-antichains} for $\ell = 2^{\min X}$, $m = n$ and $\delta = 1/k$.
Hence there exist $Z \in [X]^{n}$ and $(S_\rho: \rho \in 2^{\min X})$ satisfying the conclusions of Lemma \ref{lem:pos-antichains}.
Then $Z$ and the $S_\rho$'s are as required.
\end{proof}

We are now ready to show the existence of $(\onomega,\cdot,\cdot)$-superpersistent sets.

\begin{lemma}\label{lem:superpersistent-omega}
If $|X| \geq \bar{h}(\min X, \bar{g}(\min X, k, i), k)$
then $X$ is $(\onomega,k,i)$-superpersistent.
\end{lemma}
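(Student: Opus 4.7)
The plan is to obtain the required witness to superpersistence by chaining together the two preceding lemmata. Given a family $(T_\rho: \rho \in 2^{\min X})$ of $X$-quasistrong trees and a coloring $C: 2^{<\mathbb{N}} \to k$ as demanded by Definition \ref{def:superpersistent}, I would set $n = \bar{g}(\min X, k, i)$. The chosen lower bound on $|X|$ then reads $|X| \geq \bar{h}(\min X, n, k)$, which is exactly the hypothesis required to invoke Lemma \ref{lem:prehom-multiple}.

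Applying that lemma produces a subset $Z \in [X]^n$ with $\min Z = \min X$ together with subtrees $S_\rho \subseteq T_\rho$ such that each $S_\rho$ is simultaneously $Z$-quasistrong and $(C,Z)$-prehomogeneous. The next step is to notice that $Z$ is already persistent enough: since $\min Z = \min X$, one has $|Z| = n = \bar{g}(\min Z, k, i)$, so by Lemma \ref{lem:persistent-existence}(ii) the set $Z$ is $(\onomega,k,i)$-persistent. Hence $Z$ together with $(S_\rho)$ fulfils every clause of Definition \ref{def:superpersistent}, yielding the desired superpersistence of $X$.

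There is essentially no obstacle here: the lemma is a bookkeeping statement, and its whole content is that the quantity $\bar{h}(\min X, \bar{g}(\min X, k, i), k)$ correctly packages the sufficient conditions of Lemmata \ref{lem:prehom-multiple} and \ref{lem:persistent-existence}. The only minor point I would be careful about is the strictness of the inequality in Lemma \ref{lem:persistent-existence}(ii); if that bound is to be read strictly, one simply bumps $n$ up by one and adjusts $\bar{h}$ accordingly, and the structure of the argument is otherwise unchanged.
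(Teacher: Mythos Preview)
Your proposal is correct and follows exactly the approach the paper intends: the paper's own proof is a single sentence stating that the lemma is a direct consequence of Lemmata \ref{lem:persistent-existence} and \ref{lem:prehom-multiple}, and you have simply spelled out that chaining. Your caution about the strict inequality in Lemma \ref{lem:persistent-existence}(ii) is apt but harmless, since the proof of that lemma actually establishes the conclusion under the weak inequality $|X|\geq\bar g(\min X,k,i)$.
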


\begin{proof}
This is a direct consequence of Lemmata \ref{lem:persistent-existence} and \ref{lem:prehom-multiple}.
\end{proof}

Lemmata \ref{lem:persistent-existence} and \ref{lem:superpersistent-omega} imply the following.

\begin{corollary}\label{cor:superpersistent-omega}
Suppose that $k > 0$, $X$ is $\onomega^{3}$-large and $\min X > \max\{k, i, 4\}$.
Then $X$ is $(\onomega,k,i)$-superpersistent.
\end{corollary}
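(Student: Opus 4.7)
The goal is to verify the hypothesis of Lemma \ref{lem:superpersistent-omega}, namely
\[
  |X| \geq \bar{h}(\min X, \bar{g}(\min X, k, i), k),
\]
from the assumption that $X$ is $\onomega^3$-large with $x_0 := \min X > \max\{k,i,4\}$. Once this inequality is established, Lemma \ref{lem:superpersistent-omega} immediately yields that $X$ is $(\onomega,k,i)$-superpersistent.

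First, I would bound $\bar{g}(x_0,k,i)$ from above. By Lemma \ref{lem:persistent-existence}(i),
\[
  \bar{g}(x_0,k,i) \leq (x_0 + k + i + 1)^{2^i} \leq (4x_0)^{2^{x_0}},
\]
using $k,i < x_0$. Denote this upper bound by $n_0$; a crude but sufficient estimate gives $n_0 \leq \exp^2(c x_0)$ for some small absolute constant $c$.

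Next, plug $n_0$ into the explicit formula $\bar{h}(x_0,n,k) = h(2^{x_0},n,1/k)\cdot k^{2^{x_0}}$ with $h(\ell,m,\delta) = \exp\{e\ell\exp(5m)\}$ from \eqref{eq:h}. Since $e$ is the least integer with $2^{-e} < 1/k$ we have $e \leq k+1 < x_0 + 1$, so
\[
  h(2^{x_0}, n_0, 1/k) \leq \exp\bigl\{(x_0+1)\cdot 2^{x_0}\cdot\exp(5 n_0)\bigr\}
  \leq \exp^{c'}(x_0)
\]
for some small absolute constant $c'$ (four or five iterated exponentials of $x_0$ suffice). The factor $k^{2^{x_0}} \leq x_0^{2^{x_0}}$ is swallowed into this iterated-exponential bound, so $\bar{h}(x_0, n_0, k) \leq \exp^{c'}(x_0)$.

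Finally, Fact \ref{fac:omega3-large} together with $x_0 > 4 \geq c'$ gives
\[
  |X| > \exp^{x_0}(x_0) \geq \exp^{c'}(x_0) \geq \bar{h}(x_0, n_0, k) \geq \bar{h}(x_0, \bar{g}(x_0,k,i), k),
\]
where the last inequality uses monotonicity of $\bar{h}$ in its second argument (clear from the formula for $h$) together with Proposition \ref{fac:superpersistent}(iii) if needed. Applying Lemma \ref{lem:superpersistent-omega} concludes the proof. The argument is thus a straightforward bookkeeping of iterated exponentials; there is no real obstacle beyond making sure the constant number of exponentials produced in the bound on $\bar{h}$ is strictly dominated by the tower of height $x_0$ delivered by $\onomega^3$-largeness, which is exactly what the hypothesis $\min X > 4$ guarantees.
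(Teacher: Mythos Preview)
Your proposal is correct and follows essentially the same route as the paper: bound $\bar g(x_0,k,i)$ via Lemma~\ref{lem:persistent-existence}(i), feed this into the explicit formula for $\bar h$ from \eqref{eq:h}, observe that the result is dominated by a fixed-height exponential tower in $x_0$, and then invoke Fact~\ref{fac:omega3-large} together with Lemma~\ref{lem:superpersistent-omega}. Two minor remarks: the paper obtains the explicit bound $\bar h(x_0,\bar g(x_0,k,i),k) < \exp^5(x_0)$, so your $c'$ is $5$ rather than $\leq 4$; the argument still closes because $x_0 > 4$ forces $x_0 \geq 5$ and hence $\exp^{x_0}(x_0) \geq \exp^5(x_0)$. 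Also, the appeal to Proposition~\ref{fac:superpersistent}(iii) is unnecessary---monotonicity of $\bar h$ in its second argument is all that is used.
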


\begin{proof}
Let $k > 0$ and $X$ be $\onomega^3$-large with $x = \min X > \max\{k,i,4\}$.
Let $e$ be the least such that $2^{-e} > 1/k$. So $e \leq k$.
Clearly, $\bar{h}$ is increasing on the second variable.
By Lemma \ref{lem:persistent-existence},
\begin{align*}
  \bar{h}(x, \bar{g}(x, k, i), k) &\leq h(2^x, (x+k+i+1)^{2^i}, 1/k) k^{2^x} \\
    & \leq \exp \{k 2^x \exp(5(x+k+i+1)^{2^i}) + k 2^x\}.
\end{align*}
Since $x > \max\{k, i, 4\}$,
\begin{align*}
  k 2^x \exp(5(x+k+i+1)^{2^i}) + k 2^x &< x \exp((3x)^{2^x} + x) \\
    &< \exp^2 (x 2^x) < \exp^4(x).
\end{align*}
Thus $\bar{h}(x,\bar{g}(x,k,i),k) < \exp^5(x)$.
By Fact \ref{fac:omega3-large} and $X$ being $\onomega^3$-large,
\[
  |X| \geq \exp^x(x) > \bar{h}(x,\bar{g}(x,k,i),k).
\]
Hence $X$ is $(\onomega,k,i)$-large by Lemma \ref{lem:superpersistent-omega}.
\end{proof}

Next, we climb up the ladder of $(\onomega^e,\cdot,\cdot)$-superpersistence by the process of stacking.
We need the following variation of prehomogeneity regarding stacks.

\begin{definition} \label{def:stack-prehomogeneous}
Let $C$ be a coloring on a tree $T$ and $\vec{X} = (X_m: m < n)$ be a stack.
We say that $T$ is \emph{$(C,\vec{X})$-prehomogeneous}, if it is $(C,X_m)$-prehomogeneous for each $m < n$.
\end{definition}

Note that a $(C,\vec{X})$-prehomogeneous tree $T$ may not be $(C,\bigcup \vec{X})$-prehomogeneous,
since $T$ could be $(C,X_0)$-prehomogeneous with color $c_0$,
but $(C,X_1)$-prehomogeneous with a different color $c_1$.

\begin{lemma} \label{lem:superpersistent-stack-naive}
Suppose that
\begin{enumerate}
 \item[(a)] $\vec{X} = (X_m: m < n)$ is a stack of $(\alpha,k,i+n-1)$-superpersistent sets;
 \item[(b)] $\{T_\rho: \rho \in 2^{\min X_0}\}$ is a collection of $\vec{X}$-quasistrong trees where each $T_\rho$ is compatible with $\rho$;
 \item[(c)] $C$ is a $k$-coloring on $ \bigcup_\rho T_\rho $.
\end{enumerate}
Then there exist a stack $\vec{Y} = (Y_m: m < n)$ and a collection  $\{S_\rho: \rho \in 2^{\min X_0}\}$ such that
\begin{enumerate}
 \item [(i)] $Y_m$ is an $(\alpha,k,i)$-persistent subset of $X_m$,
 \item [(ii)] $S_\rho$ is a $\vec{Y}$-quasistrong and $(C,\vec{Y})$-prehomogeneous subtree of $T_\rho$.
\end{enumerate}
\end{lemma}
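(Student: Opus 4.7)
The plan is to proceed by induction on the length $n$ of the stack, peeling off the bottom layer $X_0$ at each step using its superpersistence and then recursing on $(X_1, \ldots, X_{n-1})$. The base case $n = 1$ is exactly the definition of $(\alpha, k, i)$-superpersistence of $X_0$, since $i + n - 1 = i$ and for a stack of length one the stack-prehomogeneity and stack-quasistrongness notions reduce to their single-layer versions.

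For the inductive step I first apply $(\alpha, k, i + n - 1)$-superpersistence of $X_0$ to the given family $\{T_\rho\}$ with the coloring $C$, producing $Y_0 \subseteq X_0$ that is $(\alpha, k, i + n - 1)$-persistent (hence $(\alpha, k, i)$-persistent by Proposition \ref{fac:persistent}(i)) together with $Y_0$-quasistrong, $(C, Y_0)$-prehomogeneous subtrees $S^{(0)}_\rho \subseteq T_\rho$. Next, for each $\rho$ and each leaf $\sigma$ of $S^{(0)}_\rho$ at level $\max Y_0$, I select an extension $\hat\sigma \in T_\rho \cap 2^{\min X_1}$, which exists because $T_\rho$ is a tree with $T_\rho \cap 2^{\min X_1} \neq \emptyset$ (by $X_1$-quasistrongness). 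The subtree $T'_{\hat\sigma} = \{\tau \in T_\rho : \tau \succeq \hat\sigma\}$ is then $(X_1, \ldots, X_{n-1})$-quasistrong. I package these $T'_{\hat\sigma}$ into a family indexed by $2^{\min X_1}$, padding the unused strings with any convenient $(X_1, \ldots, X_{n-1})$-quasistrong tree so as to meet the hypothesis of the lemma for the shorter stack.

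Since each $X_m$ with $m \geq 1$ is also $(\alpha, k, i + n - 1)$-superpersistent, hence in particular $(\alpha, k, i + (n-1) - 1)$-superpersistent by Proposition \ref{fac:superpersistent}(i), the stack $(X_1, \ldots, X_{n-1})$ together with the new family and coloring $C$ satisfies the induction hypothesis with the same $i$. Invoking it yields $(\alpha, k, i)$-persistent sets $Y_m \subseteq X_m$ for $1 \leq m \leq n-1$ together with subtrees $S'_\lambda$ that are $(Y_1, \ldots, Y_{n-1})$-quasistrong and $(C, (Y_1, \ldots, Y_{n-1}))$-prehomogeneous. I then form $S_\rho$ by gluing $S^{(0)}_\rho$, the chosen paths from each leaf $\sigma \in L_\rho$ up to its corresponding $\hat\sigma$, and the subtree $S'_{\hat\sigma}$ on top.

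The main obstacle I anticipate is essentially bookkeeping rather than a genuine combinatorial difficulty: one must verify that the glued $S_\rho$ is genuinely $\vec{Y}$-quasistrong (no promised split goes missing at the boundary between $Y_0$ and $Y_1$, and the transition through the gap $(\max Y_0, \min Y_1)$ preserves the tree property) and that $(C, \vec{Y})$-prehomogeneity survives at every layer, bearing in mind the warning after Definition \ref{def:stack-prehomogeneous} that the colors witnessing prehomogeneity may differ between layers. The index shift $i + n - 1 \mapsto i + (n - 2)$ in the superpersistence parameter is exactly what supplies the slack consumed by each recursive call.
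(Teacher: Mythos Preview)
Your inductive approach has a genuine gap at the gluing step. After applying the induction hypothesis to the family $\{T'_\lambda : \lambda \in 2^{\min X_1}\}$, each resulting subtree $S'_\lambda$ is $(C, Y_m)$-prehomogeneous for some color $c_\lambda^m$, but nothing forces these colors to be independent of $\lambda$. When you form $S_\rho$ by attaching the various $S'_{\hat\sigma}$ (one for each leaf $\sigma$ of $S^{(0)}_\rho$) on top of $S^{(0)}_\rho$, the nodes of $S_\rho$ at the $Y_m$-levels for $m \geq 1$ lie in several different $S'_{\hat\sigma}$'s. For $S_\rho$ to be $(C, Y_m)$-prehomogeneous there must be a \emph{single} color working throughout $S_\rho$, not one color per branch; your construction does not arrange this. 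The remark you cite after Definition~\ref{def:stack-prehomogeneous} is about variation of the witnessing color between \emph{layers}, which is allowed; the obstruction here is variation between \emph{branches within a single layer}, which is not.

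This is exactly where the extra $n-1$ levels of persistence are spent, and in your sketch they are simply discarded: you obtain $Y_0$ that is $(\alpha,k,i+n-1)$-persistent and immediately weaken to $(\alpha,k,i)$. In the paper's argument, superpersistence is applied at \emph{every} layer $X_m$ up front, recording a color $c_\zeta$ for each $\zeta$ at level $\min X_m$. One then iterates over $m$: having built $S_\rho^m$ up through layer $m$, one colors its leaves by $C_m(\sigma) = c_{\zeta(\sigma)}$ and invokes Lemma~\ref{lem:persistent-stack-naive} to thin all earlier layers simultaneously so that every surviving leaf leads to an extension with the \emph{same} color $c_\rho^m$. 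Only then are the layer-$(m{+}1)$ pieces glued on, and they are now guaranteed to share a common prehomogeneity color. Each such thinning consumes one level of persistence from every $Y_\ell^m$, which is why the hypothesis requires $i+n-1$ and the conclusion delivers only $i$. Your recursion consumes the slack in the \emph{superpersistence} index of the remaining stack instead, but that does no work toward color coordination.
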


\begin{proof}
Let $x_0 = \min X_0$.

If $m < n$ and $\zeta \in T_{\rho} \cap 2^{\min X_m}$, let
$$
  \hat{T}_\zeta = \{\eta \in T_{\rho}: |\eta| \leq \max X_m \wedge \eta \text{ is compatible with } \zeta\}.
$$
As $X_m$ is $(\alpha,k,i+n-1)$-superpersistent for all $m < n$, select $Y_m^{0}$ and $U_\zeta$, where $\zeta \in 2^{\min X_m}$, such that
\begin{itemize}
\item $Y_m^{0}$ is a $(\alpha,k,i+n-1)$-persistent subset of $X_m$;
\item $U_\zeta$ is a $Y_m^0$-quasistrong subtree of $\hat{T}_\zeta$ and
\item $U_\zeta$ is $(C,Y_m^0)$-prehomogeneous with color $c_\zeta$.
\end{itemize}
For $\rho$ with length $x_0$, let $S_\rho^0 = U_\rho$ and $c_\rho^0 = c_\rho$.
Suppose that $m < n$ and we have the following data,
\begin{itemize}
 \item $Y_\ell^{m}$ is an $(\alpha,k,i+n-1-m)$-persistent subset of $X_m$;
 \item $\vec{Y}_{\leq m}^m = (Y_\ell^m: \ell \leq m)$;
 \item For each $\rho \in 2^{x_0}$, $S_\rho^m$ is a subtree of $T_\rho \cap 2^{\leq \max X_m}$;
 \item  $S_\rho^m$ is $\vec{Y}_{\leq m}^m$-quasistrong and $(C,\vec{Y}_{\leq m}^m)$-prehomogeneous.
\end{itemize}

If $m = n-1$, let $Y_\ell = Y_\ell^{n-1}$ and $S_\rho = S_\rho^{n-1}$.
Then $Y_\ell$'s and $S_\rho$'s are as required.

Suppose that $m < n-1$. We construct the $Y_\ell^{m+1}$'s and $S_\rho^{m+1}$'s as follows.

Let $Y_\ell^{m+1}$ witness the $(\alpha,k,i+n-1-m)$-persistence of $Y_\ell^m$.
So $Y_\ell^{m+1}$ is an $(\alpha,k,i+n-1-m-1)$-persistent subset of $X_m$.
For $\sigma$ a leaf of $S_\rho^m$, 
 select $\zeta(\sigma)$ be such that $\sigma \prec \zeta(\sigma) \in T_\rho \cap 2^{\min X_{m+1}}$.
Let
$$
  C_m(\sigma) = c_{\zeta(\sigma)}.
$$
Then $C_m$ is a $k$-coloring on the leaves of $S_\rho^m$'s.
By Lemma \ref{lem:persistent-stack-naive}, for each $S_\rho^m$,
 there exist $c_{\rho}^m < k$ and $\hat{S}_\rho^m$ such that $\hat{S}_\rho^m$ is a subtree of $S_\rho^m$,
 $\hat{S}_\rho^m$ is $(Y_\ell^{m+1}: \ell \leq m)$-quasistrong, 
 and every leaf $\tau$ of $\hat{S}_\rho^m$ has an extension $\sigma(\tau)$ in $C_m^{-1}(c_\rho^m)$.
By the definition of $C_m$, each $\sigma(\tau)$ corresponds to $\zeta(\sigma(\tau))$ 
 such that $|\zeta(\sigma(\tau))| = \min X_{m+1}$ and $c_{\zeta(\sigma(\tau))} = c_\rho^m$.
Let $S_\rho^{m+1}$ be the union of $\hat{S}_\rho^m$ and $U_{\zeta(\sigma(\tau))}$'s, 
 where $\tau$ ranges over the leaves of $\hat{S}_\rho^m$.

Let $\vec{Y}_{\leq m+1}^{m+1} = (Y_\ell^{m+1}: \ell \leq m+1)$.
As each $U_{\zeta(\sigma(\tau))}$ is $Y_{m+1}^{m+1}$-quasistrong, 
 $S_\rho^{m+1}$ is $\vec{Y}_{\leq m+1}^{m+1}$-quasistrong;
 and as each $U_{\zeta(\sigma(\tau))}$ is $(C,Y_{m+1}^{m+1})$-prehomogeneous with color $c_\rho^m$, 
 $S_\rho^{m+1}$ is $(C,\vec{Y}_{\leq m+1}^{m+1})$-prehomogeneous.
\end{proof}

With the above lemma and the pigeonhole principle, 
 we can construct $(\alpha \cdot n,\cdot, \cdot)$-superpersistent sets by stacking $(\alpha, \cdot,\cdot)$-superpersistent sets.

\begin{lemma}\label{lem:superpersistent-stack}
Suppose that $\hat{n} = (2^{i+1} n - 2^{i+1}) k^{2^{x_0}} + 1$ and $j = i + \hat{n} - 1$.
If $(X_m: m < \hat{n})$ is a stack of $(\alpha,k,j)$-superpersistent sets and $x_0 = \min X_0$,
then $X = \bigcup_{m < \hat{n}} X_m$ is $(\alpha \cdot n, k, i)$-superpersistent.
\end{lemma}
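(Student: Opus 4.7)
The plan is to apply Lemma~\ref{lem:superpersistent-stack-naive} first, then a pigeonhole to align the prehomogeneity colours across blocks, and finally a thinning construction in the style of Lemma~\ref{lem:persistent-stack} that uses half the aligned blocks as levels of the witness and the other half as both splits and ``colour-bridges'' across the block transitions. Applying Lemma~\ref{lem:superpersistent-stack-naive} to $(X_m : m < \hat{n})$ (with its internal parameter set to $i$; the required $(\alpha, k, i + \hat{n} - 1)$-superpersistence matches the hypothesis $j = i + \hat{n} - 1$) yields a stack $\vec{Y} = (Y_m : m < \hat{n})$ of $(\alpha, k, i)$-persistent sets with $Y_m \subseteq X_m$, together with trees $\tilde{S}_\rho \subseteq T_\rho$ that are $\vec{Y}$-quasistrong and $(C, \vec{Y})$-prehomogeneous. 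For each $\rho$ and $m$, let $c_m^\rho < k$ denote the colour witnessing $(C, Y_m)$-prehomogeneity of $\tilde{S}_\rho$. The tuple $(c_m^\rho)_{\rho \in 2^{x_0}}$ takes at most $k^{2^{x_0}}$ values, so since $\hat{n} = (2^{i+1}n - 2^{i+1}) k^{2^{x_0}} + 1$, pigeonhole supplies $M \subseteq \{0, \dots, \hat{n}-1\}$ with $|M| \geq 2^{i+1}(n-1) + 1$ on which this tuple is constant, say equal to $(c_\rho)_\rho$.

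Enumerate $M = \{m_0 < m_1 < \cdots\}$ in increasing order and set $\bar{Y} = \bigcup_{\ell} Y_{m_{2\ell}}$. The even local indices contribute at least $2^i(n-1) + 1 = 2^i n - 2^i + 1$ blocks, so by Lemma~\ref{lem:persistent-stack} the set $\bar{Y}$ is $(\alpha \cdot n, k, i)$-persistent, which will be the required persistence witness. Now build $S_\rho \subseteq \tilde{S}_\rho$ iteratively by mimicking the proof of Lemma~\ref{lem:persistent-stack}: at each stage, for every leaf $\sigma$ at level $\max Y_{m_{2\ell}}$, use $Y_{m_{2\ell+1}}$-quasistrongness of $\tilde{S}_\rho$ to pick two distinct extensions of $\sigma$ at level $\max Y_{m_{2\ell+1}}$, then extend each inside $\tilde{S}_\rho$ up to level $\max Y_{m_{2(\ell+1)}}$. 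The odd-indexed blocks $Y_{m_{2\ell+1}}$ thus furnish the splits across transitions in $\bar{Y}$, making $S_\rho$ into a $\bar{Y}$-quasistrong subtree of $\tilde{S}_\rho$.

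The remaining and main obstacle is the verification of $(C, \bar{Y})$-prehomogeneity of $S_\rho$ with colour $c_\rho$. Within a single block $Y_{m_{2\ell}}$ this is immediate, since $m_{2\ell} \in M$ forces $c_{m_{2\ell}}^\rho = c_\rho$. The subtle case is a transition from $\max Y_{m_{2\ell}}$ to $\min Y_{m_{2(\ell+1)}}$: given $\sigma \prec \tau$ in $S_\rho$ at these two levels, the initial segments $\tau \upharpoonright y$ for $y \in Y_{m_{2\ell+1}}$ all lie in $S_\rho$ by closure, and applying $(C, Y_{m_{2\ell+1}})$-prehomogeneity of $\tilde{S}_\rho$---which also uses colour $c_\rho$, precisely because $m_{2\ell+1} \in M$ as well---to two consecutive such initial segments produces the desired $\zeta \in S_\rho$ with $\sigma \preceq \zeta \preceq \tau$ and $C(\zeta) = c_\rho$. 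This is exactly why $\hat{n}$ carries the factor $2^{i+1}$ rather than $2^i$: roughly twice as many aligned blocks as Lemma~\ref{lem:persistent-stack} alone would require are needed, so that the odd-indexed half of the enumeration of $M$ remains available to bridge the block transitions of $\bar{Y}$ with the correct colour.
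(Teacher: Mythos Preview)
Your argument is correct and follows exactly the paper's route: apply Lemma~\ref{lem:superpersistent-stack-naive}, pigeonhole on the colour tuples $(c_m^\rho)_\rho$ to obtain the aligned index set (your $M$, the paper's $L$) of size $2^{i+1}(n-1)+1$, keep the even-indexed blocks and invoke Lemma~\ref{lem:persistent-stack} for the persistence of $\bar{Y}$, then thin $\tilde S_\rho$ to a $\bar Y$-quasistrong $(C,\bar Y)$-prehomogeneous subtree. The paper compresses this last step into a single sentence (``it is easy to find\ldots''), whereas you spell out both the split construction via the odd-indexed $Y_{m_{2\ell+1}}$ and, more importantly, the verification that the block transitions $\max Y_{m_{2\ell}} \to \min Y_{m_{2(\ell+1)}}$ are bridged using $(C,Y_{m_{2\ell+1}})$-prehomogeneity with colour $c_\rho$; your explanation of why the factor $2^{i+1}$ rather than $2^i$ is needed is a genuinely clarifying addition.
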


\begin{proof}
Fix a family $\{T_\rho: \rho \in 2^{x_0}\}$ of $X$-quasistrong trees and let $C$ be a $k$-coloring.
Then each $T_\rho$ is $(X_m: m < \hat{n})$-quasistrong.

By Lemma \ref{lem:superpersistent-stack-naive}, 
 there exist $\vec{Y} = (Y_m: m < \hat{n})$ and $\hat{S}_\rho$'s such that $Y_m$ is a $(\alpha,k,i)$-persistent subset of $X_m$,
 $\hat{S}_\rho$ is a $\vec{Y}$-quasistrong subtree of $T_\rho$, 
 and $\hat{S}_\rho$ is $(C,\vec{Y})$-prehomogeneous.
Let $c_\rho^m$ be such that if $(x,z) \in [Y_m]^2$ and $\sigma \in \hat{S}_\rho \cap 2^z$ 
 then $C(\sigma \upharpoonright y) = c_\rho^m$ for some $y \in [x,z]$.

For each $m < \hat{n}$ and $\rho\in 2^{x_0}$, we have $c_\rho^m \in k$.
Hence there exist $L \subseteq [0,\hat{n}-1]$ and $\{c_\rho: \rho \in 2^{x_0}\}$ 
 such that $|L| = 2^{i+1} n - 2^{i+1} + 1$ and $c_\rho^\ell = c_\rho$ for each $\ell \in L$ and $\rho \in 2^{x_0}$.
List the members of $L$ as
$$
  m_0 < m_1 < \ldots < m_{2^{i+1} n - 2^{i+1}}.
$$
Let $Z_\ell = Y_{m_{2\ell}}$.
By Lemma \ref{lem:persistent-stack}, 
 $Z = \bigcup_{\ell \leq 2^i n - 2^i} Z_\ell$ is $(\alpha \cdot n,k,i)$-persistent.
Now, it is easy to find for each $\rho$ a tree $S_\rho \subseteq \hat{S}_\rho$, 
 which is $Z$-quasistrong and $(C,Z)$-prehomogeneous with color $c_\rho$.
\end{proof}

Lemma \ref{lem:superpersistent-induction} below is an analog of Lemma \ref{lem:persistent-induction}.

\begin{lemma}\label{lem:superpersistent-induction}
If $X$ is $(\onomega^e \cdot (\min X + 1), k, i)$-superpersistent,
then $X$ is $(\onomega^{e+1},k,i)$-superpersistent.
\end{lemma}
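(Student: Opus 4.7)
The plan is to mirror the proof of Lemma \ref{lem:persistent-induction}, with the only new ingredient arising from having to handle an entire family of trees simultaneously rather than a single one. Suppose $X$ is $(\onomega^e \cdot (\min X + 1), k, i)$-superpersistent, and fix an arbitrary family $\{T_\rho : \rho \in 2^{\min X}\}$ of $X$-quasistrong trees and an arbitrary coloring $C : 2^{<\mathbb{N}} \to k$. First I would invoke the superpersistence hypothesis to obtain a subset $Y \subseteq X$ that is $(\onomega^e \cdot (\min X + 1), k, i)$-persistent, together with, for each $\rho \in 2^{\min X}$, a $Y$-quasistrong tree $S_\rho \subseteq T_\rho$ which is $(C,Y)$-prehomogeneous for some color $c_\rho < k$.

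Next, to promote $Y$ to a set with the correct persistence strength $\onomega^{e+1}$, I would reuse the swap from Lemma \ref{lem:persistent-induction}: let $Z = \{\min X\} \cup (Y \setminus \{\min Y\})$. Proposition \ref{fac:persistent}(iv), applied to the injection $Y \to Z$ that sends $\min Y$ to $\min X$ and fixes every other element, shows that $Z$ is $(\onomega^e \cdot (\min X + 1), k, i)$-persistent. Since $\min Z = \min X$, this is $(\onomega^e \cdot (\min Z + 1), k, i)$-persistence, so Lemma \ref{lem:persistent-induction} upgrades $Z$ to an $(\onomega^{e+1}, k, i)$-persistent set, as required by the definition of $(\onomega^{e+1},k,i)$-superpersistence.

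It remains, for each $\rho$, to thin $S_\rho$ to a tree $U_\rho$ that is $Z$-quasistrong and $(C,Z)$-prehomogeneous. Setting $y_1 = \min(Y \setminus \{\min Y\})$, the $Y$-quasistrongness of $S_\rho$ provides a node in $S_\rho \cap 2^{\min Y}$ with two incompatible extensions $\sigma_{\rho,0}, \sigma_{\rho,1}$ in $S_\rho \cap 2^{y_1}$; because $T_\rho$ is compatible with $\rho$, both $\sigma_{\rho,j}$ extend the unique string $\rho \in S_\rho \cap 2^{\min X}$. Letting $U_\rho = \{\tau \in S_\rho : \tau \text{ is compatible with } \sigma_{\rho,0} \text{ or } \sigma_{\rho,1}\}$, the two levels $\min X$ and $y_1$ of $Z$ split correctly in $U_\rho$, and all higher consecutive pairs of $Z$ coincide with consecutive pairs of $Y$, so $Z$-quasistrongness transfers from the $Y$-quasistrongness of $S_\rho$.

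For $(C,Z)$-prehomogeneity with color $c_\rho$, the only pair of consecutive levels of $Z$ that is not already consecutive in $Y$ is $(\min X, y_1)$; but any $\tau \in U_\rho \cap 2^{y_1}$ extends $\rho$, and the $(C,Y)$-prehomogeneity of $S_\rho$ applied to the consecutive pair $(\min Y, y_1)$ of $Y$ supplies some $\zeta$ with $\tau \upharpoonright \min Y \preceq \zeta \preceq \tau$ and $C(\zeta) = c_\rho$, which automatically satisfies $\rho \preceq \zeta \preceq \tau$. I do not expect any real obstacle here; the delicate points are merely to perform the swap $Y \mapsto Z$ before invoking Lemma \ref{lem:persistent-induction} (so that the $\min X + 1$ coefficient matches $\min Z + 1$), and to check quasistrongness and prehomogeneity at the new bottom pair $(\min X, y_1)$ separately from the levels inherited from $Y$.
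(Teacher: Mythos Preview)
Your proposal is correct and follows essentially the same route as the paper's proof: the paper also applies the superpersistence hypothesis, performs the swap $\{\min X\}\cup(Y\setminus\{\min Y\})$, invokes Proposition~\ref{fac:persistent} and Lemma~\ref{lem:persistent-induction}, and thins each $S_\rho$ by restricting to nodes compatible with a chosen element of $S_\rho\cap 2^{\min Y}$ (your choice of its two extensions at level $y_1$ amounts to the same subtree). The only cosmetic difference is that the paper swaps the names $Y$ and $Z$.
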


\begin{proof}
Let $\{T_\rho: \rho \in 2^{\min X}\}$ be a family of $X$-quasistrong trees such that $T_\rho \cap 2^{\min X} = \{\rho\}$,
and let $C$ be a $k$-coloring on $\bigcup_\rho T_\rho $.
By the $(\onomega^e \cdot (\min X + 1), k, i)$-superpersistence of $X$, there exist $Z \subseteq X$ and $\{\hat{S}_\rho: \rho \in 2^{\min X}\}$ such that $Z$ is $(\onomega^e \cdot (\min X + 1), k, i)$-persistent,
each $\hat{S}_\rho$ is a $Y$-quasistrong and $(C,Z)$-prehomogeneous subtree of $T_\rho$.
For each $\rho$, select $\sigma(\rho) \in \hat{S}_\rho \cap 2^{\min Z}$.
Let $S_\rho$ be the subtree of $\hat{S}_\rho$ consisting of elements compatible with $\sigma(\rho)$.
Let
$$
  Y = \{\min X\} \cup (Z - \{\min Z\}).
$$
By Proposition \ref{fac:persistent}, $Y$ is $(\onomega^e \cdot (\min X + 1), k, i)$-persistent as well.
By Lemma \ref{lem:persistent-induction} and the fact that $\min Y = \min X$, $Y$ is $(\onomega^{e+1},k,i)$-persistent.
Moreover, the $S_\rho$'s are $Y$-quasistrong and $(C,Y)$-prehomogeneous.
Thus $X$ is $(\onomega^{e+1},k,i)$-superpersistent.
\end{proof}

We now establish a connection between $\alpha$-largeness and superpersistence, similar to that shown in Corollary \ref{cor:persistent-largeness}.

\begin{lemma}\label{lem:superpersistent-large}
If $d > 0$ and $X$ is an $\onomega^{2d+1}$-large set with $\min X > \max\{4,k,i\}$, then $X$ is $(\onomega^{d},k,i)$-superpersistent.
\end{lemma}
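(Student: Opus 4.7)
The plan is induction on $d$, paralleling the proof of Corollary \ref{cor:persistent-largeness} but substituting the superpersistence versions of the stacking and induction lemmas, namely Lemmas \ref{lem:superpersistent-stack} and \ref{lem:superpersistent-induction}. The base case $d=1$ is exactly Corollary \ref{cor:superpersistent-omega}.

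For the inductive step, assume the statement for $d-1$ and let $X$ have $x_0 = \min X > \max\{4,k,i\}$. First I decompose $X = X_0 \cup X_1$ with both $\onomega^{2d}$-large and $X_0 < X_1$, and set $n = x_0 + 1$, $\hat{n} = 2^{i+1} x_0 k^{2^{x_0}} + 1$, and $j = i + \hat{n} - 1$, the parameters prescribed by Lemma \ref{lem:superpersistent-stack} with $\alpha = \onomega^{d-1}$. A brief calculation using $x_0 > \max\{k,i\}$ shows $\hat{n} \leq \exp^{3}(x_0)$, while Fact \ref{fac:omega3-large} applied to $X_0$ (which is $\onomega^{3}$-large since $d \geq 2$) gives $\min X_1 > \exp^{x_0}(x_0) \gg \hat{n}$. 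Writing $X_1 = \{\min X_1\} \cup X_1^{*}$ with $X_1^{*}$ being $\onomega^{2d-1} \cdot \min X_1$-large, I carve $X_1^{*}$ into a stack $Y_0 < \cdots < Y_{\hat{n}-1}$ of $\onomega^{2d-1}$-large pieces; each $Y_m$, and hence also $Y_0' := \{\min X_1\} \cup Y_0$, has minimum exceeding $\max\{4,k,j\}$, so the induction hypothesis makes each of these $(\onomega^{d-1},k,j)$-superpersistent. Now let $f\colon X_1 \to X$ be the order-preserving injection sending the $r$-th element of $X_1$ to the $r$-th element of $X$, so $f(x) \leq x$ and $f(\min X_1) = x_0$. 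By Proposition \ref{fac:superpersistent}(iv), the images $f(Y_0'), f(Y_1), \ldots, f(Y_{\hat{n}-1})$ form a stack of $(\onomega^{d-1},k,j)$-superpersistent subsets of $X$ whose first piece has minimum $x_0$. Lemma \ref{lem:superpersistent-stack} then yields that the union $Z$ of this stack is $(\onomega^{d-1} \cdot (x_0+1), k, i)$-superpersistent; Lemma \ref{lem:superpersistent-induction} upgrades this to $(\onomega^{d},k,i)$-superpersistence since $\min Z = x_0$; and Proposition \ref{fac:superpersistent}(iii) transfers the property from $Z$ to its superset $X$.

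The point to watch carefully is the parameter inflation built into Lemma \ref{lem:superpersistent-stack}: the stack pieces must be superpersistent at the much larger index $j \approx k^{2^{x_0}}$ rather than $i$. This is why the $Y_m$ are drawn from deep inside $X_1$, whose minimum is a tower in $x_0$, so that the induction hypothesis can absorb $j$; the shift $f$ is then purely cosmetic, reseating the stack at the correct ambient minimum $x_0$ while Proposition \ref{fac:superpersistent}(iv) preserves superpersistence. The estimate $\min X_1 > \hat{n}$ via Fact \ref{fac:omega3-large} is the one place where the full strength of the hypothesis $\onomega^{2d+1}$-large is consumed.
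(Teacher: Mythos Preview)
Your proof is correct and follows essentially the same route as the paper's: induction on $d$, with the base case supplied by Corollary~\ref{cor:superpersistent-omega} and the inductive step carried out by decomposing $X = X_0 \cup X_1$, carving $X_1$ into a stack of $\hat{n}$ many $\omega^{2d-1}$-large blocks, applying the induction hypothesis to each block at the inflated index $j$, and then invoking Lemmas~\ref{lem:superpersistent-stack} and~\ref{lem:superpersistent-induction} together with Proposition~\ref{fac:superpersistent}. The one place where you are more explicit than the paper is the use of the order-preserving shift $f\colon X_1 \to X$ to reseat the bottom of the stack at $x_0 = \min X$; this mirrors exactly what the paper does in the parallel argument for persistence (Corollary~\ref{cor:persistent-largeness}), and it is what makes the parameters in Lemma~\ref{lem:superpersistent-stack} line up literally, since the $\hat{n}$ there is computed from $\min$ of the first stack piece. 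The paper's own proof of Lemma~\ref{lem:superpersistent-large} leaves this step implicit. Your rough bound $\hat{n} \leq \exp^3(x_0)$ is perhaps a touch optimistic, but since $\min X_1 > \exp^{x_0}(x_0)$ via Fact~\ref{fac:omega3-large} any fixed-height tower bound on $\hat{n}$ and $j$ suffices, so the conclusion is unaffected.
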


\begin{proof}
We prove the lemma by induction on $d$.
The case where $d = 1$ follows from Lemma \ref{lem:superpersistent-omega}.
Suppose that $d > 1$ and $X$ is $\onomega^{2d+1}$-large.
Then $X = X_0 \cup X_1$, where $X_0 < X_1$ and both $X_i$'s are $\onomega^{2d}$-large.
Let $x_0 = \min X$, $n = x_0 + 1$, $\hat{n} = (2^{i+1} n - 2^{i+1}) k^{2^{x_0}} + 1$, and $j = i + \hat{n} - 1$.
By easy calculations and Fact \ref{fac:omega3-large},
$$
  \min X_1 > \max\{k,\hat{n},j\}.
$$
So $X_1 - \{\min X_1\}$ is a union of a stack $(Y_m: m < \hat{n})$ of $\onomega^{2d-1}$-large sets.
By induction hypothesis,  each $Y_m$ is  $(\onomega^{d-1},k,i)$-superpersistent.
By Lemma \ref{lem:superpersistent-stack}, $X_1 - \{\min X_1\}$ is $(\onomega^d \cdot (x_0 + 1),k,i)$-superpersistent.
By Proposition \ref{fac:superpersistent} and Lemma \ref{lem:superpersistent-induction}, $X$ is $(\onomega^{d+1},k,i)$-superpersistent.
\end{proof}

Theorem \ref{thm:large-prehomogeneous-trees} follows immediately from Lemma \ref{lem:superpersistent-large}.

\section{$\Pi^0_3$-Conservation}\label{s:conservation}

In this section we prove the following conservation theorem for $\TT^1$.
Unless otherwise indicated, $\mathbb{N}$ will denote the set of standard natural numbers.

\begin{theorem}\label{thm:conservation}
$\WKL + \TT^1$ is $\Pi^0_3$-conservative over $\RCA$.
\end{theorem}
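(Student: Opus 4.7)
The plan is to establish $\Pi^0_3$-conservation via a model-theoretic extension argument in the style of Ko{\l}odziejczyk and Yokoyama \cite{KoYo}. Since $\Pi^0_3$ sentences in second-order arithmetic are truth-determined by the first-order part of a model, it suffices to show that every countable $\M \models \RCA$ extends to a countable $\M' \models \WKL + \TT^1$ with $\mathbb{N}^{\M'} = \mathbb{N}^{\M}$. I would construct $\M'$ as the union of a chain $\M = \M_0 \subseteq \M_1 \subseteq \cdots$ of countable models of $\RCA$, all with the same first-order part, arranged (by a standard bookkeeping) so that each $\WKL$-instance and each $\TT^1$-instance eventually appearing in $\M'$ is handled at some finite stage.

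For the $\WKL$-instances I would use the standard Harrington/Jockusch--Soare low basis technique, which extends $\M_n$ by a low path while preserving $\RCA$ together with the first-order part. For a $\TT^1$-instance---a finite coloring $C$ of $2^{<\mathbb{N}}$ inside $\M_n$---I would set up a forcing whose conditions are pairs $(F, Y)$, where $F$ is a finite quasistrong subtree already shown to be $(C,Y')$-prehomogeneous for some color $c$ and some $Y' \subseteq Y$, and $Y$ is $(\omega^d, k, i)$-superpersistent for standard parameters $d, k, i$ matching the number of colors. Theorem \ref{thm:large-prehomogeneous-trees} guarantees extendibility of conditions at every stage, since any $\omega^{2d+1}$-large quasistrong input yields an $\omega^d$-large prehomogeneous quasistrong refinement. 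Iterating the prehomogeneity reduction finitely many times (to eliminate the dependence on the color and upgrade prehomogeneity to full homogeneity), a sufficiently generic filter produces an isomorphic monochromatic subtree $G$, giving $\M_{n+1} = \M_n[G]$.

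The principal obstacle will be verifying that the $\TT^1$-extension step preserves $I\Sigma_1$ (and hence $\RCA$) without enlarging the first-order part. This is delicate, since $\TT^1$ is known to require care in models without $I\Sigma_2$ (cf.~\cite{CGM2009}). The superpersistence framework of Section \ref{s:persistence} is designed precisely to address this: because the prehomogeneous approximations are produced by primitive recursive operations on $\alpha$-large sets with all relevant bounds primitive recursive in standard parameters, each forcing step is internal to $\M_n$, and the resulting generic $G$ can be arranged to be ``low'' in the appropriate sense to preserve first-order arithmetic. With these ingredients in place, the Ko{\l}odziejczyk--Yokoyama preservation machinery transfers---with the role of Ramsey's theorem for pairs replaced by the prehomogeneity theorem of Section \ref{s:persistence}---and we conclude that $\M' = \bigcup_n \M_n$ is a countable model of $\RCA + \WKL + \TT^1$ with $\mathbb{N}^{\M'} = \mathbb{N}^{\M}$, as required.
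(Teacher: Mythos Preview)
Your plan has a fatal obstruction at the very first step. You propose to extend an arbitrary countable $\M \models \RCA$ to some $\M' \models \WKL + \TT^1$ with $\mathbb{N}^{\M'} = \mathbb{N}^{\M}$. But $\RCA + \TT^1 \vdash B\Sigma^0_2$ (already $\TT^1$ implies the infinite pigeonhole principle $\RT^1$, which is equivalent to $B\Sigma^0_2$ over $\RCA$), and parameter-free $B\Sigma_2$ is a first-order scheme not derivable in $I\Sigma_1$. So if $\M$ is any countable model of $\RCA$ whose first-order part fails $B\Sigma_2$---and such models exist---then no $\omega$-extension of $\M$ can satisfy $\TT^1$. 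Your proposed sufficiency is thus strictly stronger than what is being proved: an $\omega$-extension argument would give conservation for \emph{all} first-order sentences, not just $\Pi^0_3$ ones, and that is false. No amount of care in the forcing step can repair this.

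The paper's argument is of a different nature, and so in fact is the Ko{\l}odziejczyk--Yokoyama argument you invoke: both are \emph{cut constructions}, not $\omega$-extensions. One begins with a countable $\M \models \RCA$ in which the target $\Pi^0_3$ sentence fails, fixes an $\M$-finite $\omega^d$-large set $X$ with $d$ nonstandard (with $X$ chosen inside a set coding witnesses to the $\Sigma^0_3$ failure), and then uses Theorem~\ref{thm:large-prehomogeneous-trees} internally in $\M$ to produce a descending chain $X = X_0 \supseteq X_1 \supseteq \cdots$ of $\M$-finite sets, each $\omega^{d_i}$-large for some nonstandard $d_i$, arranged so that every coded finite coloring receives an $X_j$-quasistrong $(C,X_j)$-prehomogeneous subtree at some stage. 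The cut $I = \sup_i \min X_i$ is then semi-regular, so $(I,\Cod(M/I)) \models \WKL$, and the prehomogeneous trees descend to solutions of $\TT^1$-instances in the coded model. The first-order part genuinely shrinks from $M$ to $I$; this is precisely how the $B\Sigma_2$ obstruction is evaded, while the careful choice of $X$ ensures the $\Pi^0_3$ failure persists in $I$.
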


\begin{proof}
Suppose that $\RCA \not\vdash \forall x \exists y \forall z R(x,y,z)$, where $R$ is a $\Sigma^0_0$-predicate.
We prove that $\RCA + \mathsf{WKL}_0+ \TT^1$ does not imply $\forall x \exists y \forall z R(x,y,z)$ either,
by exhibiting a model of $\RCA + \TT^1$, which does not satisfy $\forall x \exists y \forall z R(x,y,z)$.

Let $\mathfrak M=(M,\mathcal{S})$ be a countable model of $\RCA + \exists x \forall y \exists z \neg R(x,y,z)$,
and let $a \in M$ be such that $\mathfrak M \models \forall y \exists z \neg R(a,y,z)$.
Select an $\M$-infinite $B \in \mathcal{S}$ such that if $(b_0,b_1) \in [B]^2$ then
\begin{equation}\label{eq:conservation-B}
 \mathfrak M \models \forall y < b_0 \exists z < b_1 \neg R(a,y,z).
\end{equation}
Let $X$ be an $\M$-finite subset of $B$ such that $X$ is $\omega^d$-large for some $d \in M \setminus \mathbb{N}$ and $\min X > a$.
By \cite{KoYo} and Theorem \ref{thm:large-prehomogeneous-trees}, we can define a sequence $(X_i: i \in \mathbb{N})$ of  $\M$-finite sets such that
\begin{itemize}
 \item $X = X_0 \supseteq\cdots\supseteq  X_i \supset X_{i+1}\supseteq\cdots$;
 \item In $M$, $X_i$ is $\omega^{d_i}$-large for some $d_i \in M \setminus  \mathbb{N}$;
 \item $\min X_{i+1} > \min X_i$;
 \item If $E$ is an $\M$-finite set with $|E|^{\M} < \min X_i$ then there exists a $j > i$ such that $  [\min X_j, \max X_j] \cap E = \emptyset$;
 \item if $C \in \mathcal{S}$ is a $k$-coloring of $2^{< M}$ for some $k < \min X_i$ then there exist $j > i$ and $S$ such that $S$ is an $\M$-finite $X_j$-quasi-strong and $(C,X_j)$-prehomogeneous tree.
\end{itemize}

Let
$$
  I = \bigcup_{i \in \mathbb{N}} [0,\min X_i].
$$
As in \cite{KoYo}, it is easy to verify that $I$ is a semi-regular cut of $M$, and thus $\mathfrak{N} = (I,\Cod(M/I)) \models \WKL$.
By \eqref{eq:conservation-B} and the fact that $a < \min X_0$,
$$
  \mathfrak{N} \models \exists x \forall y \exists z \neg R(x,y,z).
$$
To see that $\mathfrak{N} \models \TT^1$, fix $k \in I$ and a $k$-coloring $\hat{C}$ of $2^{< I}$.
Then there exist $i \in \mathbb{N}$ and $C \in \mathcal{S}$ such that $k < \min X_i$, $\hat{C} = C \cap I$ and $C$ is a $k$-coloring of $2^{< M}$.
By the construction of the $X_i$'s, let $j > i$ and $S \in M$ be such that $S$ is an $X_j$-quasi-strong and $(C,X_j)$-prehomogeneous tree.
Then $\hat{S} = S \cap I \in \Cod(M/I)$ is a $\hat{C}$-prehomogeneous perfect tree in $\mathfrak{N}$.
As $\mathfrak{N} \models I\Sigma^0_1$, in $\mathfrak{N}$ there exists a $\hat{C}$-homogeneous perfect subtree of $\hat{S}$.
Hence, $\mathfrak{N} \models \TT^1$.
\end{proof}

For readers familiar with \cite{KoYo}, it is not difficult to see that the proof of the above theorem can be combined with the proof of \cite[Theorem 3.3]{KoYo} to yield a stronger result:

\begin{theorem}\label{thm:conservation-1}
$\WKL + \RT^2_2 + \TT^1$ is $\Pi^0_3$-conservative over $\RCA$.
\end{theorem}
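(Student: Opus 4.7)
The plan is to mimic the proof of Theorem \ref{thm:conservation} essentially verbatim, adding one extra clause to the list of properties of the sequence $(X_i : i \in \mathbb{N})$ that captures the $\RT^2_2$-content of \cite[Theorem 3.3]{KoYo}. Concretely, start with a countable $\M = (M,\mathcal{S}) \models \RCA + \exists x \forall y \exists z \neg R(x,y,z)$, fix a witness $a \in M$, select an $\M$-infinite $B \in \mathcal{S}$ satisfying \eqref{eq:conservation-B}, and pick an $\M$-finite $\omega^d$-large $X \subseteq B$ with $d \in M \setminus \mathbb{N}$ and $\min X > a$. Then recursively thin $X$ down a descending chain $(X_i)$ of $\M$-finite $\omega^{d_i}$-large sets with $d_i$ nonstandard, satisfying the five bullet-point conditions in the proof of Theorem \ref{thm:conservation}, together with the extra clause
\begin{quote}
\emph{If $C \in \mathcal{S}$ is a $k$-coloring of $[M]^2$ with $k < \min X_i$, then there exist $j > i$ and an $\M$-finite $H \subseteq X_j$ such that $H$ is $\omega^{d'}$-large for some nonstandard $d'$ and $H$ is $C$-prehomogeneous in the sense of \cite{KoYo}.}
\end{quote}
This extra clause is achievable at each step because the $\RT^2_2$-thinning of \cite{KoYo} consumes only a standardly-bounded number of exponents of $\omega$ to pass from an $\omega^{d_i}$-large set to a prehomogeneous $\omega^{d'}$-large subset, and we have all of nonstandard $d_i$ to spare. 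Since the enumerations of $\TT^1$-instances, $\RT^2_2$-instances, and finite-set-diagonalizations from $\mathcal{S}$ are jointly countable, we interleave their requirements and satisfy them one at a time.

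As in the proof of Theorem \ref{thm:conservation}, set $I = \bigcup_{i\in\mathbb{N}} [0,\min X_i]$ and $\mathfrak{N} = (I,\Cod(M/I))$. The finite-set-diagonalization clause guarantees that $I$ is a semi-regular cut, so $\mathfrak{N} \models \WKL$, and \eqref{eq:conservation-B} together with $a < \min X_0$ yields $\mathfrak{N} \models \exists x \forall y \exists z \neg R(x,y,z)$. The verification $\mathfrak{N} \models \TT^1$ is then literally the verification from the proof of Theorem \ref{thm:conservation}. For $\mathfrak{N} \models \RT^2_2$, given $\hat{C} \in \Cod(M/I)$ coloring $[I]^2$ with $k < \min X_i$ colors, lift to some $C \in \mathcal{S}$ coloring $[M]^2$; by the extra clause applied in $M$, obtain $j > i$ and an $H \in M$ that is $\omega^{d'}$-large and $C$-prehomogeneous with $H \subseteq X_j$. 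Then $\hat{H} = H \cap I \in \Cod(M/I)$ is an $\mathfrak{N}$-infinite $\hat{C}$-prehomogeneous set in the sense of \cite{KoYo}; one final application of $I\Sigma^0_1$ inside $\mathfrak{N}$ (which holds because $\mathfrak{N}\models \RCA$) extracts an $\mathfrak{N}$-infinite $\hat{C}$-homogeneous subset, exactly as in the $\TT^1$-verification.

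The main obstacle is to confirm that the $\TT^1$-shrinking given by Theorem \ref{thm:large-prehomogeneous-trees} and the $\RT^2_2$-shrinking of \cite{KoYo} can be applied alternately along the same descending sequence $(X_i)$ without either destroying the hypotheses required by the other. Both operations send an $\omega^{d}$-large set to an $\omega^{d-\mathrm{O}(1)}$-large subset whose minimum is only pushed up finitely, and neither operation makes use of any internal structure on $X_i$ beyond its $\alpha$-largeness and the bound $\min X_i > k,i,4$; consequently both shrinkings compose freely. Once this compatibility is noted, the bookkeeping is routine and the theorem follows.
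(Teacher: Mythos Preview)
Your proposal is precisely the combination the paper intends: interleave the $\TT^1$-thinning of Theorem~\ref{thm:large-prehomogeneous-trees} with the $\RT^2_2$-thinning from \cite{KoYo} along a single descending chain $(X_i)$, noting that each step consumes only standardly many exponents of $\omega$ and neither imposes structure on $X_i$ beyond $\alpha$-largeness, so they compose freely. One small correction to your extra clause: you need some $X_j$ \emph{itself} to be $C$-prehomogeneous (rather than merely some $H \subseteq X_j$), since otherwise nothing forces $H \cap I$ to be cofinal in $I$; in the construction this is arranged by taking the prehomogeneous subset produced at the relevant stage as the next term of the sequence, exactly parallel to how the $\TT^1$ step sets the next $X_j$ equal to the $Y$ returned by Theorem~\ref{thm:large-prehomogeneous-trees}.
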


In \cite{CLWY}, it is proved that $\TT^1$ is $\Pi^1_1$-conservative over $\RCA + B\Sigma^0_2 + P\Sigma^0_1$.
Since $P\Sigma^0_1$ is a $\Pi^0_3$ sentence, the following is a direct consequence of Theorem \ref{thm:conservation-1}.

\begin{corollary}\label{cor:TT1-PSigma1}
$\WKL + \RT^2_2 + \TT^1 \not\vdash P\Sigma^0_1$.
\end{corollary}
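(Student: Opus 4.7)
The plan is very short because the corollary is formally a contraposition. First I would assume for contradiction that $\WKL + \RT^2_2 + \TT^1 \vdash P\Sigma^0_1$. Since $P\Sigma^0_1$ is stated in the introduction to be a $\Pi^0_3$-sentence (Kreuzer--Yokoyama \cite{KY}), I could then invoke Theorem \ref{thm:conservation-1} directly: $\Pi^0_3$-conservation of $\WKL + \RT^2_2 + \TT^1$ over $\RCA$ transfers the derivation to a proof $\RCA \vdash P\Sigma^0_1$.

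Next I would dispose of the possibility that $\RCA$ actually proves $P\Sigma^0_1$. This is where I would cite \cite{KY} once more: among the equivalents of $P\Sigma^0_1$ over $\RCA$ is the totality of the Ackermann function. By Parsons' theorem the provably total computable functions of $\RCA$ are exactly the primitive recursive ones, so the Ackermann function is not provably total in $\RCA$, and hence $\RCA \nvdash P\Sigma^0_1$. This contradicts the displayed consequence of the assumption and establishes the corollary.

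There is essentially no obstacle here beyond ensuring that the syntactic complexity of $P\Sigma^0_1$ is genuinely $\Pi^0_3$ (so that Theorem \ref{thm:conservation-1} applies) and citing the right source for $\RCA \nvdash P\Sigma^0_1$; both points are already recorded earlier in the paper. If one wanted an alternative route avoiding Parsons' theorem, one could instead use the fact established in \cite{KY} that $P\Sigma^0_1$ strictly exceeds $\RCA$ in consistency strength, or exhibit explicitly a model of $\RCA$ (for instance one whose first-order part is a model of $I\Sigma_1$ in which Ackermann fails to be total) in which $P\Sigma^0_1$ fails; either reference suffices to close the argument.
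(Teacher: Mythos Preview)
Your argument is correct and matches the paper's: the corollary is stated there as a direct consequence of Theorem~\ref{thm:conservation-1} together with the fact that $P\Sigma^0_1$ is a $\Pi^0_3$-sentence not provable in $\RCA$. You have simply made explicit (via the Ackermann/Parsons route) the step $\RCA \nvdash P\Sigma^0_1$ that the paper leaves implicit.
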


\bigskip

We end this paper with two questions. The second question  generalizes the longstanding open question for $\mathsf{RT}^2_2$:
\begin{enumerate}
\item  Does $\RT^2_2$  imply  $\TT^1$ over $\RCA$?

\vskip.15in

\item Is $\RCA +\RT^2_2 +\TT^1$ a $\Pi^1_1$-conservative system over $\RCA+B\Sigma_2$?
\end{enumerate}

\bibliographystyle{plain}
\bibliography{TT1-consv-main}

\begin{thebibliography}{10}

\bibitem{CLWY}
C.~T. Chong, Wei Li, Wei Wang, and Yue Yang.
\newblock On the strength of {R}amsey's theorem for trees.
\newblock {\em Adv. Math.}, 369:107180, 39, 2020.

\bibitem{CSY2014}
C.~T. Chong, Theodore~A. Slaman, and Yue Yang.
\newblock The metamathematics of stable {R}amsey's theorem for pairs.
\newblock {\em J. Amer. Math. Soc.}, 27(3):863--892, 2014.

\bibitem{CSY2017}
C.~T. Chong, Theodore~A. Slaman, and Yue Yang.
\newblock The inductive strength of {R}amsey's {T}heorem for {P}airs.
\newblock {\em Adv. Math.}, 308:121--141, 2017.

\bibitem{CGM2009}
Jared Corduan, Marcia~J. Groszek, and Joseph~R. Mileti.
\newblock Reverse mathematics and {R}amsey's property for trees.
\newblock {\em J. Symbolic Logic}, 75(3):945--954, 2010.

\bibitem{Gac1986}
P\'{e}ter G\'{a}cs.
\newblock Every sequence is reducible to a random one.
\newblock {\em Inform. and Control}, 70(2-3):186--192, 1986.

\bibitem{Hirst1987}
Jeffry~L. Hirst.
\newblock {\em Combinatorics in {S}ubsystems of {S}econd {O}rder {A}rithmetic}.
\newblock PhD thesis, The Pennsylvania State University, 1987.

\bibitem{KS1981}
Jussi Ketonen and Robert Solovay.
\newblock Rapidly growing {R}amsey functions.
\newblock {\em Ann. of Math. (2)}, 113(2):267--314, 1981.

\bibitem{KoYo}
Leszek~Aleksander Ko{\l}odziejczyk and Keita Yokoyama.
\newblock Some upper bounds on ordinal-valued {R}amsey numbers for colourings
  of pairs.
\newblock {\em Selecta Math. (N.S.)}, 26(4):Paper No. 56, 18, 2020.

\bibitem{KY}
Alexander~P. Kreuzer and Keita Yokoyama.
\newblock On principles between {$\Sigma_1$}- and {$\Sigma_2$}-induction, and
  monotone enumerations.
\newblock {\em J. Math. Log.}, 16(1):1650004, 21, 2016.

\bibitem{Kuc1984}
Anton\'{\i}n Ku\v{c}era.
\newblock Measure, {$\Pi^0_1$}-classes and complete extensions of {${\rm PA}$}.
\newblock In {\em Recursion theory week ({O}berwolfach, 1984)}, volume 1141 of
  {\em Lecture Notes in Math.}, pages 245--259. Springer, Berlin, 1985.

\bibitem{KP1978}
J.~B. Paris and L.~A.~S. Kirby.
\newblock {$\Sigma _{n}$}-collection schemas in arithmetic.
\newblock In {\em Logic {C}olloquium '77 ({P}roc. {C}onf., {W}roc\l aw, 1977)},
  volume~96 of {\em Stud. Logic Foundations Math.}, pages 199--209.
  North-Holland, Amsterdam-New York, 1978.

\bibitem{PY}
Ludovic Patey and Keita Yokoyama.
\newblock The proof-theoretic strength of {R}amsey's theorem for pairs and two
  colors.
\newblock {\em Adv. Math.}, 330:1034--1070, 2018.

\bibitem{Sim2009}
Stephen~G. Simpson.
\newblock {\em Subsystems of {S}econd {O}rder {A}rithmetic}.
\newblock Perspectives in Logic. Cambridge University Press, Cambridge;
  Association for Symbolic Logic, Poughkeepsie, NY, second edition, 2009.

\end{thebibliography}
\end{document}